\documentclass[10pt,reqno]{amsart}

\usepackage{a4wide}

\usepackage{amsrefs}
\usepackage{mathrsfs}

\usepackage{amsmath}
\usepackage{amsthm}
\usepackage{amsfonts}
\usepackage{amssymb}
\usepackage[super]{nth}

\theoremstyle{plain}
\newtheorem{thm}{Theorem}[section]

\theoremstyle{definition}
\newtheorem{rem}[thm]{Remark}

\numberwithin{thm}{section}
\numberwithin{equation}{section}

\newcommand\ddfrac[2]{\frac{\displaystyle #1}{\displaystyle #2}}

\def\esup{\operatornamewithlimits{ess\,sup}}
\def\Id{\operatorname{I}}

\def\dual{\,^{^{\bf c}}\!}
\def\supp{\operatorname{supp}}

\allowdisplaybreaks

\begin{document}

\author{Amiran Gogatishvili \and Tu\u{g}\c{c}e \"{U}nver}

\title{Embeddings Between weighted local Morrey-type spaces}

\address{A. Gogatishvili,
Mathematical Institute, Czech Academy of Sciences,
\v Zitn\'a~25,
115~67 Praha~1, Czech Republic}
\email{gogatish@math.cas.cz}

\address{T. \"{U}nver, Department of Mathematics, Faculty of Science and Arts, Kirikkale University, 71450 Yahsihan, Kirikkale, Turkey}
\email{tugceunver@kku.edu.tr}

\subjclass[2000]{26D10, 46E20}

\keywords{weighted local Morrey-type spaces, weighted Ces\`{a}ro function spaces, weights,  
iterated Hardy inequalities}

\thanks{%\section*{Acknowledgements}
The research of  the first author  was partially supported by grant no. P201-18-00580S of the Grant Agency of the Czech Republic, by RVO: 67985840, and by Shota Rustaveli National Science Foundation (SRNSF), grant no: FR17-589.}

\dedicatory{Dedicated to the \nth{75} birthday of Professor Lars Erik Persson}

\begin{abstract}
In this paper certain $n$-dimensional inequalities are shown to be equivalent to the inequalities in the one-dimensional setting. By this means, embeddings between weighted local Morrey-type spaces are characterized for some ranges of parameters.  
\end{abstract}

%%%%%%%%%%%%%%%%%%%%%%%%%%%%%%%%%%%%%%%%%%%%%%%%%%%%%%%%%%%%%%%%%%%%%%%%%%%%%%%%%%%%%%%%%%%%%%%%%%%%%%%%%%%%%%%%%%%%%%%%%%%%%%%%%%%%%%%%%%%%
%%%%%%%%%%%%%%%%%%%%%%%%%%%%%%%%%%%%%%%%%%%%%%%%%%%%%%%%%%%%%%%%%%%%%%%%%%%%%%%%%%%%%%%%%%%%%%%%%%%%%%%%%%%%%%%%%%%%%%%%%%
%%           Introduction and the main results
%%%%%%%%%%%%%%%%%%%%%%%%%%%%%%%%%%%%%%%%%%%%%%%%%%%%%%%%%%%%%%%%%%%%%%%%%%%%%%%%%%%%%%%%%%%%%%%%%%%%%%%%%%%%%%%%%%%%%%%%%%%%%%%%%%%%%%%%%%%%
%%%%%%%%%%%%%%%%%%%%%%%%%%%%%%%%%%%%%%%%%%%%%%%%%%%%%%%%%%%%%%%%%%%%%%%%%%%%%%%%%%%%%%%%%%%%%%%%%%%%%%%%%%%%%%%%%%%%%%%%%

\maketitle

\section{Introduction} \label{intro}

In this paper, we study some inequalities involving one-dimensional and $n$-dimensional Hardy-type 
operators. Our goal is to find the weight characterizations of inequality
\begin{align}\label{main inequality}
&\bigg(\int_0^{\infty} \bigg(\int_{B(0,t)} f(s)^{p_2} v_2(s)^{p_2} 
ds\bigg)^{\frac{q_2}{p_2}} w_2(t)^{q_2} dt\bigg)^{\frac{1}{q_2}} \nonumber 
\\
&\hspace{3cm}\leq c \bigg(\int_0^{\infty} \bigg(\int_{B(0,t)} f(s)^{p_1} 
v_1(s)^{p_1} ds\bigg)^{\frac{q_1}{p_1}} w_1(t)^{q_1} 
dt\bigg)^{\frac{1}{q_1}},
\end{align}
where $0 < p_1, p_2, q_1, q_2 < \infty$ and $v_1, v_2, w_1, w_2$ are non-negative measurable functions. In order to characterize \eqref{main inequality} we will use the solutions of the corresponding one-dimensional inequality. 

Let us first present some  notations used in this paper.

Let $A$ be a nonempty measurable subset of $\mathbb{R}^n, \, n\ge 1$. By  $\mathcal M(A)$, we 
denote the set of all measurable functions on $A$.  We denote by $\mathcal M^+(A)$, the set of all 
nonnegative measurable functions on $A$. A weight is a measurable, positive and finite a.e  
function on $A$ and we will denote the set of weights by $\mathcal W(A)$.

For $p\in [0, \infty)$, we define the functional $\|\cdot\|_{p,A}$ on $\mathcal{M}(A)$ by
\begin{equation*}
\|f\|_{p,A}:= \bigg\{\begin{array}{ccc}
\big(\int_A |f(x)|^p dx\big)^{\frac{1}{p}}& if  & p< \infty,  \\ 
\esup\limits_{x\in A} |f(x)|& if & p=\infty.
\end{array} \bigg.
\end{equation*}
If $w\in \mathcal{W}(A)$, then the weighted Lebesgue space $L_{p,w}(A)$ is given by
\begin{equation*}
L_{p,w}(A) := \{f \in \mathcal{M}(A): \|fw\|_{p,A}< \infty\}.
\end{equation*}

If $A=(a,b), \, 0\le a<b\le\infty$,   then we write  $\mathcal M(a,b)$,  $\mathcal M^+(a,b)$, $\mathcal W^+(a,b)$, $L_p(a,b)$ and $L_\infty(a,b)$, correspondingly.

For $x \in \mathbb{R}^n$ and $r >0$, let $B(x,r)$ be the open ball centered at $x$ of radius $r$ 
and $\dual B(x,r) = \mathbb{R}^n \backslash B(x,r)$.

Let $X$ and $Y$ be quasi normed vector spaces. If $X \subset Y$ and the identity operator is continuous from $X$ to 
$Y$, that is, there exists a positive constant $C$ such that $\|\Id(z)\|_Y \leq C \|z\|_X$ for all $z \in X$, we say 
that $X$ is embedded into $Y$ and write $X \hookrightarrow Y$.  Throughout the paper, we always denote by $c$ and $C$ a 
positive constant which is independent of main parameters but it may vary from line to line. However, a constant with 
subscript such as $c_1$ does not change in different occurrences. The standard notation $A \lesssim B $ means that 
there exists a constant $\lambda >0$ depending only on inessential parameters such that $A \leq \lambda B$ and we write 
$A \approx B$ if both $A \lesssim B$ and $B \lesssim A$. Since the expressions
in our main results are too long, to make the formulas simpler we sometimes omit the differential element $dx$. We will denote the left-hand side (right-hand side) of an inequality $*$ by $LHS(*)\,  (RHS(*))$.
 
We denote by  $LM_{p,q}(v, w)$, the weighted local Morrey-type spaces, the collection of all functions $f \in L_{p,v}^{loc}(\mathbb{R}^n)$ such that
\begin{equation*}
\|f\|_{LM_{p,q}(v, w)} = \bigg(\int_0^{\infty} \bigg( \int_{B(0,t)} f(s)^p v(s)^p ds \bigg)^{\frac{q}{p}} w(t)^q dt \bigg)^{\frac{1}{q}} < \infty,
\end{equation*}	
and $\dual LM_{p,q}(v, w)$, the weighted complementary local Morrey-type spaces, the collection of all functions $f \in L_{p, v}(\dual B(0, t))$ such that
\begin{equation*}
\|f\|_{\dual LM_{p,q}(v, w)} = \bigg(\int_0^{\infty} \bigg( \int_{\dual B(0,t)} f(s)^p v(s)^p ds \bigg)^{\frac{q}{p}} w(t)^q dt \bigg)^{\frac{1}{q}} < \infty,
\end{equation*}	
where $p,q \in (0, \infty)$, $w \in \mathcal{M}^+(0,\infty)$ and $v \in \mathcal{W}(\mathbb{R}^n)$

Under these notations, \eqref{main inequality} is equivalent to the continuous embedding between weighted local Morrey-type spaces, that is,
\begin{equation}\label{emb. LM1-LM2}
LM_{p_1, q_1}(v_1, w_1) \hookrightarrow LM_{p_2, q_2}(v_2, w_2).
\end{equation}

The properties of local Morrey-type spaces and boundedness of the classical operators in these spaces have been studied intensively (for the detailed history and related results of these spaces we  refer to the survey papers \cite{Burenkov-Survey-I} and \cite{Burenkov-Survey-II}).  Some necessary and sufficient conditions for the boundedness of the maximal operator, fractional maximal operator, Riesz potential and singular integral operator in the local Morrey-type spaces $LM_{p,q}(1, w) (\dual LM_{p,q}(1, w))$ were given in \cite{BurGul2004, BurGulGul2007-1, BurGulGul2007-2, BurGulTarSer, BurGul2009, BurGogGulMus}. 

The main tool in the literature to investigate the boundedness of the aforementioned operators in local Morrey-type spaces is to reduce these problems to the boundedness of the Hardy operator in weighted Lebesgue spaces on the cone of nonnegative, nonincreasing functions with the help of H\"{o}lder's inequality. But in \cite{BurGol} authors characterized the boundedness of maximal operator from $L_{p_1}$ to $LM_{p_2, q}(1, w)$ by using the characterization of the embeddings $L_{p_1} \hookrightarrow LM_{p_2, q}(1, w)$ which is multidimensional weighted Hardy's inequality. 

Since H\"{o}lder's inequality is strict, it is possible to obtain better results for the boundedness problem in local Morrey-type spaces, using the characterization of the embeddings between local Morrey-type spaces. With this motivation we started investigating the embedding problem between various function spaces. In \cite{MusUn}, characterizations of the embeddings between weighted Lebesgue spaces and weighted (complementary) local Morrey-type spaces are given with the help of solutions of the multidimensional direct and reverse weighted Hardy inequalities. In \cite{GogMusUn-LM}, embeddings between weighted complementary local Morrey-type spaces and weighted local Morrey-type spaces are investigated. The method was based on the duality approach which allowed us to reduce this problem to the characterizations of some iterated Hardy inequalities. It was not possible to solve the embedding \eqref{emb. LM1-LM2}, because it required the solutions of some iterated Hardy inequalities which were unknown at that time. These problems have been solved in \cite{KrePick}, recently. 

Although the same method works in our case, in this paper we will use a different approach to give the characterization of the embeddings \eqref{emb. LM1-LM2}. We will reduce this problem to the one-dimensional case and use the solutions of the embeddings between weighted Ces\`{a}ro function spaces from \cite{Unver} which can be seen as the one-dimensional case of our problem.

Let us briefly describe the structure of our paper. In the next section we give some information about the equivalency of some one-dimensional and $n$-dimensional inequalities. In the last section, we will provide some background information on the properties of weighted local Morrey-type spaces and characterize the embeddings between these spaces.

\section{Equivalent Inequalities}

The weight characterizations for one-dimensional Hardy inequality, that is,
\begin{equation*}
\bigg(\int_0^{\infty} \bigg(\int_0^x f(t) dt \bigg)^q w(x) dx \bigg)^{\frac{1}{q}} \leq C \bigg(\int_0^{\infty} f(t)^p v(t) dt \bigg)^{\frac{1}{p}}
\end{equation*}
have been studied for all $f\in \mathcal{M}^+(0, \infty)$, throughly, where $0 < q \leq \infty$ and $1 \leq p \leq \infty$. For a detailed history, see \cite{KufPerSam-Book}.

In \cite{EvGogOp}, the complete weight characterizations for so-called reverse Hardy inequality, that is,
\begin{equation*}
\bigg(\int_0^{\infty} \bigg(\int_0^x f(t) dt \bigg)^q w(x) dx \bigg)^{\frac{1}{q}} \geq C \bigg(\int_0^{\infty} f(t)^p v(t) dt \bigg)^{\frac{1}{p}}
\end{equation*} 
were given for all $ f\in \mathcal{M}^+(0, \infty)$ using the discretization method, where $0 < q \leq \infty$ and $0 < p \leq 1$.

In \cite{DrabHeinKuf}, using polar coordinates, authors extended the one-dimensional Hardy inequality to $n$-dimensional case and gave the characterizations of weights for which
\begin{equation*}
\bigg(\int_{\mathbb{R}^n} \bigg(\int_{B(0, |x|)} f(t) dt \bigg)^q w(x) dx \bigg)^{\frac{1}{q}} \leq C \bigg(\int_{\mathbb{R}^n} f(t)^p v(t) dt \bigg)^{\frac{1}{p}}
\end{equation*}
holds for all $f \in \mathcal{M}^+(\mathbb{R}^n)$, where $0 < q < \infty$ and $1 < p < \infty$. Note that in \cite{ChristGraf}, this problem has been considered for a special case. 

In \cite{GogMus-RevHardy}, authors dealt with the multidimensional analogue of reverse Hardy inequality, 
\begin{equation}
\bigg(\int_0^{\infty} \bigg(\int_{B(0, x)} f(t) dt \bigg)^q w(x) dx \bigg)^{\frac{1}{q}} \geq C \bigg(\int_{\mathbb{R}^n} f(t)^p v(t) dt \bigg)^{\frac{1}{p}}
\end{equation} 
for all $f \in \mathcal{M}^+(\mathbb{R}^n)$, where $0 < q \leq \infty$ and $0 < p \leq 1$. The approach they used here is the multidimensional analogues of the discretization technique given in \cite{EvGogOp}. 

Although these problems are in higher dimensions, it seems that with some simple modifications in one-dimensional theory, they can also be analysed using the similar steps. On that account it would be better to reduce the problems in the $n$-dimensional cases to the problems in one-dimesional cases. In \cite{Sinnamon}, Sinnamon handled the $n$-dimensional Hardy inequality by reducing it to one-dimensional Hardy inequality and extended the results on $n$-dimensional Hardy inequality to more general star shape domains. We will use ideas from \cite{Sinnamon} in this paper. 

Before proceeding to the theorems, let us recall the following integration in polar coordinates formula. By $S^{n-1}$, we denote the unit sphere $\{x \in \mathbb{R}^n: |x| = 1\}$. If $x \in \mathbb{R}^n \backslash \{0\} $, the polar coordinates of $x$ are 
\begin{equation*}
r= |x| \in (0, \infty), \quad x' = \frac{x}{|x|} \in S^{n-1}.
\end{equation*}
There is a unique Borel measure $\sigma = \sigma_{n-1}$ on $S^{n-1}$ such that if $f$ is Borel measurable on $\mathbb{R}^n$ and $f \geq 0$ or $f \in L_1(\mathbb{R}^n)$, then
\begin{equation*}
\int_{\mathbb{R}^n} f(x) dx = \int_0^{\infty} \int_{S^{n-1}} f(\tau x') \tau^{n-1} d\sigma(x') d\tau
\end{equation*}
(see, for instance, \cite{Folland}).

Let us now present our equivalency results. 

\begin{thm}\label{T:Morrey-Cesaro-p<1}
Let $0 < p < 1$ and $0 < q,\theta < \infty$. Assume that $u, w \in \mathcal{W}(0, \infty)$ and $v \in \mathcal{W}(\mathbb{R}^n) $.  Then the following two statements are equivalent.

\textup{(i)} There exists a constant $C$ such that	
\begin{equation}\label{Morrey emb.-p<1}
\bigg(\int_0^{\infty} \bigg(\int_{B(0,t)} f(s)^p v(s) ds\bigg)^{\frac{q}{p}} u(t) dt 
\bigg)^{\frac{1}{q}}  \leq C \bigg( \int_0^{\infty} \bigg(\int_{B(0,t)} f(s)  ds 
\bigg)^{\theta} w(t) dt \bigg)^{\frac{1}{\theta}}
\end{equation}
holds for every $f\in \mathcal{M}^+(\mathbb{R}^n)$.

\textup{(ii)} There exists a constant $C'$ such that
\begin{equation}\label{Cesaro emb.-p<1}
\bigg(\int_0^{\infty} \bigg(\int_0^t g(s)^p \tilde v(s) ds\bigg)^{\frac{q}{p}} u(t) dt 
\bigg)^{\frac{1}{q}}  \leq C' \bigg( \int_0^{\infty} \bigg(\int_0^t g(s)  ds 
\bigg)^{\theta} 
w(t) dt \bigg)^{\frac{1}{\theta}}
\end{equation}
holds for every $g\in \mathcal{M}^+(0,\infty)$, where
\begin{equation*}
\tilde v(t):= \bigg(\int_{S^{n-1}} v(ts')^{\frac{1}{1-p}} d\sigma(s')\bigg)^{1-p} 
t^{(n-1)(1-p)}.
\end{equation*}
Moreover the best constants of inequalities \eqref{Morrey emb.-p<1} and \eqref{Cesaro emb.-p<1} 
satisfies $C = C'$.
\end{thm}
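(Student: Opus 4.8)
The plan is to prove the two inequalities are equivalent by passing through polar coordinates and using the strict concavity/convexity behavior of the map $f \mapsto \big(\int_{S^{n-1}} f(\tau s')^p\, d\sigma(s')\big)^{1/p}$ when $0 < p < 1$. The key observation is that the right-hand sides of both \eqref{Morrey emb.-p<1} and \eqref{Cesaro emb.-p<1} depend on $f$ (resp.\ $g$) only through the radial average $\tau \mapsto \int_{S^{n-1}} f(\tau s')\,\tau^{n-1}\,d\sigma(s')$, while the left-hand sides involve the weighted $L^p$-sphere-average; so to pass from $n$ dimensions to one dimension one should, for a given radial profile of $\int f\,d\sigma$, choose the \emph{worst} angular distribution of $f$, and this optimization is exactly what produces the weight $\tilde v$.

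Concretely, I would argue both implications. For (ii)$\Rightarrow$(i): given $f \in \mathcal M^+(\mathbb R^n)$, write the left-hand side of \eqref{Morrey emb.-p<1} in polar coordinates as $\big(\int_0^\infty (\int_0^t (\int_{S^{n-1}} f(\tau s')^p v(\tau s')\,d\sigma(s'))\,\tau^{n-1}\,d\tau)^{q/p} u(t)\,dt\big)^{1/q}$, and set $g(\tau) := \big(\int_{S^{n-1}} f(\tau s')^p v(\tau s')\,d\sigma(s')\big)^{1/p}\tau^{(n-1)/p} / \tilde v(\tau)^{1/p}$ — chosen precisely so that $\int_0^t g(s)^p \tilde v(s)\,ds$ reproduces $\int_{B(0,t)} f(s)^p v(s)\,ds$. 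It then remains to show $\int_0^t g(s)\,ds \le \int_{B(0,t)} f(s)\,ds$, which reduces pointwise (in $\tau$) to the inequality $\big(\int_{S^{n-1}} f^p v\,d\sigma\big)^{1/p} \big(\int_{S^{n-1}} v^{1/(1-p)}\,d\sigma\big)^{-(1-p)/p} \le \int_{S^{n-1}} f\,d\sigma$; writing $a := f^p v$ and using the reverse Hölder inequality $\int f\,d\sigma = \int (a/v)^{1/p}\,d\sigma \ge \big(\int a\,d\sigma\big)^{1/p}\big(\int v^{-1/(p-1)}\,d\sigma\big)^{-(1-p)/p}$ (valid since $1/p > 1$, with conjugate exponent $1/(1-p)$ applied to $v^{-1/p}$) gives exactly this. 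Plugging into \eqref{Cesaro emb.-p<1} yields \eqref{Morrey emb.-p<1} with $C = C'$.

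For the converse (i)$\Rightarrow$(ii): given $g \in \mathcal M^+(0,\infty)$, I want to build an $f$ on $\mathbb R^n$ whose radial profile of $\int f\,d\sigma$ matches $\int g$ (so the right-hand sides agree) and whose weighted sphere-$L^p$-average matches $g^p \tilde v$ (so the left-hand sides agree); equality in the reverse Hölder step above forces the \emph{choice} $f(\tau s') = g(\tau)\,\tau^{n-1}\, v(\tau s')^{-1/(1-p)} \big(\int_{S^{n-1}} v(\tau s')^{1/(1-p)}\,d\sigma(s')\big)^{-1}$, i.e.\ the angular density proportional to $v^{-1/(1-p)}$. One checks directly that for this $f$ both $\int_{B(0,t)} f\,ds = \int_0^t g\,ds$ and $\int_{B(0,t)} f^p v\,ds = \int_0^t g^p \tilde v\,ds$ hold with equality, so \eqref{Morrey emb.-p<1} applied to this $f$ gives \eqref{Cesaro emb.-p<1} with $C' = C$. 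Combined with the first direction this also yields the sharp-constant assertion $C = C'$.

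The main obstacle is purely bookkeeping: verifying that the measurability requirements and the weight conditions ($u,w \in \mathcal W(0,\infty)$, $v \in \mathcal W(\mathbb R^n)$) guarantee the auxiliary functions $g$ and $f$ are genuinely measurable and finite a.e., that $\tilde v$ is a well-defined weight on $(0,\infty)$ (finiteness and positivity a.e.\ of the sphere integral $\int_{S^{n-1}} v(ts')^{1/(1-p)}\,d\sigma(s')$ for a.e.\ $t$, which follows from Fubini applied to $v^{1/(1-p)} \in \mathcal M^+(\mathbb R^n)$), and that the polar-coordinate substitution is applied to a legitimately nonnegative Borel-measurable integrand throughout. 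The analytic heart — the reverse Hölder inequality with its equality case — is short; care is only needed to handle the degenerate situations (e.g.\ where $\int g = \infty$ on some set, or where the sphere integrals vanish), which can be dispatched by the usual truncation/monotone convergence arguments.
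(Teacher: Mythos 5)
Your overall strategy is the same as the paper's: pass to polar coordinates, use H\"older's inequality with exponents $(\tfrac1p,\tfrac1{1-p})$ on each sphere for one implication, and for the converse use a test function whose angular part saturates that H\"older inequality. Your direction (ii)$\Rightarrow$(i) is correct and is essentially the paper's argument. However, in the direction (i)$\Rightarrow$(ii) the explicit test function you write down is wrong in two places, and the step ``one checks directly that for this $f$ both identities hold with equality'' fails for the $f$ as stated. The equality case of H\"older applied to
\begin{equation*}
\int_{S^{n-1}} f(\tau s')^p\, v(\tau s')\,d\sigma(s') \le \bigg(\int_{S^{n-1}} f(\tau s')\,d\sigma(s')\bigg)^{p}\bigg(\int_{S^{n-1}} v(\tau s')^{\frac{1}{1-p}}\,d\sigma(s')\bigg)^{1-p}
\end{equation*}
(take $F=f^p$, $G=v$ with exponents $1/p$ and $1/(1-p)$) forces $f\propto v^{+1/(1-p)}$ on each sphere, not $v^{-1/(1-p)}$; and to make $\int_{B(0,t)}f=\int_0^t g$ the radial factor must be $\tau^{1-n}$, cancelling the Jacobian $\tau^{n-1}$, rather than $\tau^{n-1}$. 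Concretely, with your $f$ one gets
\begin{equation*}
\int_{S^{n-1}} f(\tau s')\,\tau^{n-1}\,d\sigma(s') = g(\tau)\,\tau^{2(n-1)}\,\frac{\int_{S^{n-1}} v(\tau s')^{-\frac{1}{1-p}}\,d\sigma(s')}{\int_{S^{n-1}} v(\tau s')^{\frac{1}{1-p}}\,d\sigma(s')},
\end{equation*}
which is not $g(\tau)$ in general, so the right-hand sides of \eqref{Morrey emb.-p<1} and \eqref{Cesaro emb.-p<1} do not match (nor do the left-hand sides), and the implication does not follow as written.

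The correct choice is
\begin{equation*}
f(\tau s') = g(\tau)\,\tau^{1-n}\, v(\tau s')^{\frac{1}{1-p}}\bigg(\int_{S^{n-1}} v(\tau \sigma')^{\frac{1}{1-p}}\,d\sigma(\sigma')\bigg)^{-1},
\end{equation*}
for which both $\int_{B(0,t)}f=\int_0^t g$ and $\int_{B(0,t)}f^p v=\int_0^t g^p\tilde v$ do hold; this is exactly the paper's test function. With that correction (and the routine measurability/degeneracy checks you already flag), your argument goes through and coincides with the paper's proof, including the sharp-constant conclusion $C=C'$.
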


\begin{proof}
Assume first that \eqref{Morrey emb.-p<1} holds for all $f \in \mathcal{M}^+(\mathbb{R}^n)$.
Set
$$
f(x) = g(|x|) \bigg(\int_{S^{n-1}} v(|x|\tau')^{\frac{1}{1-p}} d\sigma(\tau')\bigg)^{-1} 
v(x)^{\frac{1}{1-p}} |x|^{1-n}.
$$
Using spherical coordinates, we have that
\begin{align*}
LHS\eqref{Morrey emb.-p<1} & =  \bigg(\int_0^{\infty} \bigg( \int_0^t \int_{S^{n-1}} f(rs')^p v(rs') d\sigma(s')  
r^{n-1} dr \bigg)^{\frac{q}{p}} u(t) dt \bigg)^{\frac{1}{q}} \\
& = \bigg(\int_0^{\infty} \bigg(\int_0^t \int_{S^{n-1}} g(|rs'|)^p \bigg(\int_{S^{n-1}} v(|rs'|\tau')^{\frac{1}{1-p}} d\sigma(\tau')\bigg)^{-p}   \bigg.\bigg.\\
& \bigg. \bigg.  \hspace{3cm}\times v(rs')^{\frac{p}{1-p}+1} \,  |rs'|^{p(1-n)} d\sigma(s')  r^{n-1} dr \bigg)^{\frac{q}{p}} u(t) dt \bigg)^{\frac{1}{q}}\\
& = \bigg(\int_0^{\infty} \bigg(\int_0^t g(r)^p \bigg(\int_{S^{n-1}} v(r\tau')^{\frac{1}{1-p}} 
d\sigma(\tau')\bigg)^{1-p} r^{(n-1)(1-p)} dr \bigg)^{\frac{q}{p}} u(t) dt \bigg)^{\frac{1}{q}}\\
& = \bigg(\int_0^{\infty} \bigg(\int_0^t g(s)^p \tilde v(s) ds\bigg)^{\frac{q}{p}} u(t) dt 
\bigg)^{\frac{1}{q}}\\
& = LHS\eqref{Cesaro emb.-p<1}.
\end{align*}
Moreover, 
\begin{align*}
RHS\eqref{Morrey emb.-p<1} & = C \bigg(\int_0^{\infty} \bigg(\int_0^t \bigg[\int_{S^{n-1}} f(rs') d\sigma(s') \bigg] r^{n-1} dr \bigg)^{\theta} w(t) dt \bigg)^{\frac{1}{\theta}} \\
& = C \bigg(\int_0^{\infty} \bigg(\int_0^t g(r) dr \bigg)^{\theta} w(t) dt  
\bigg)^{\frac{1}{\theta}}.
\end{align*}
Therefore, \eqref{Cesaro emb.-p<1} holds with the best constant $C'$ such that  $C'\leq C$.
	
Conversely, assume now that \eqref{Cesaro emb.-p<1} holds for all $g \in \mathcal{M}^+(0,\infty)$.	
Applying H\"{o}lder's inequality with exponents $(\frac{1}{p}, \frac{1}{1-p})$, we get that
\begin{align*}
LHS\eqref{Morrey emb.-p<1} & \leq \bigg(\int_0^{\infty} \bigg(\int_0^t \bigg[\int_{S^{n-1}} 
f(rs')  d\sigma(s')\bigg]^p \bigg[\int_{S^{n-1}} v(rs')^{\frac{1}{1-p}} d\sigma(s')\bigg]^{1-p}  
r^{n-1} dr \bigg)^{\frac{q}{p}} u(t) dt \bigg)^{\frac{1}{q}}\\
& =  \bigg(\int_0^{\infty} \bigg(\int_0^t  \tilde{v}(r) \bigg[\int_{S^{n-1}} f(rs')^p  d\sigma(s') 
\bigg]^p  r^{p(n-1)} dr \bigg)^{\frac{q}{p}} u(t) dt \bigg)^{\frac{1}{q}}.
\end{align*}
Applying inequality \eqref{Cesaro emb.-p<1}, we obtain that
\begin{align*}
LHS\eqref{Morrey emb.-p<1} & \leq C' \bigg(\int_0^{\infty} \bigg(\int_0^t \bigg[\int_{S^{n-1}} 
f(rs') d\sigma(s')\bigg]  r^{(n-1)} dr \bigg)^{\theta} w(t) dt \bigg)^{\frac{1}{\theta}}\\
&= C' \bigg( \int_0^{\infty} \bigg(\int_{B(0,t)} f(s)  ds 
\bigg)^{\theta} w(t) dt \bigg)^{\frac{1}{\theta}}
\end{align*}
holds. Hence, \eqref{Morrey emb.-p<1} holds. Moreover the best constant of inequality \eqref{Morrey emb.-p<1} satisfy $C \leq C'$.  
\end{proof}

Let us now consider the case when $p=1$.

\begin{thm}\label{T:Morrey-Cesaro-p=1}
Let $0 < q, \theta < \infty$. Assume that $u, w \in \mathcal{W}(0, \infty)$ and $v \in \mathcal{W}(\mathbb{R}^n) $.  Then the following two statements are equivalent.

\textup{(i)} There exists a constant $C$ such that	
\begin{equation}\label{Morrey emb.p=1}
\bigg(\int_0^{\infty} \bigg(\int_{B(0,t)} f(s) v(s) ds\bigg)^{q} u(t) dt \bigg)^{\frac{1}{q}}  \leq 
C \bigg( \int_0^{\infty} \bigg(\int_{B(0,t)} f(s)  ds \bigg)^{\theta} w(t) dt 
\bigg)^{\frac{1}{\theta}}
\end{equation}
holds for every $f \in \mathcal{M}^+(\mathbb{R}^n)$.

\textup{(i)} There exists a constant $C'$ such that	
\begin{equation}\label{Cesaro emb.p=1}
\bigg(\int_0^{\infty} \bigg(\int_0^t g(s) \tilde v(s) ds\bigg)^{q} u(t) dt \bigg)^{\frac{1}{q}}  
\leq C' \bigg( \int_0^{\infty} \bigg(\int_0^t g(s)  ds \bigg)^{\theta} w(t) dt 
\bigg)^{\frac{1}{\theta}}
\end{equation}
holds for every $g \in \mathcal{M}^+(0, \infty)$, where
\begin{equation*}
\tilde v(t):= \esup_{s' \in S^{n-1}} v(ts').
\end{equation*}
Moreover the best constants of inequalities \eqref{Morrey emb.p=1} and \eqref{Cesaro emb.p=1} satisfy $C \approx C'$.
\end{thm}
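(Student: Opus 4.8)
The plan is to follow the architecture of the proof of Theorem~\ref{T:Morrey-Cesaro-p<1}, noting that as $p\uparrow 1$ the conjugate exponent $\tfrac{1}{1-p}$ degenerates, so that the H\"older step becomes the trivial pointwise estimate $\int_{S^{n-1}}f(rs')v(rs')\,d\sigma(s')\le\tilde v(r)\int_{S^{n-1}}f(rs')\,d\sigma(s')$, and the sharp test function is replaced by one concentrated on a near-maximal portion of each sphere.

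For the implication \textup{(ii)}$\Rightarrow$\textup{(i)}: given $f\in\mathcal M^+(\mathbb R^n)$, set $g(r):=r^{n-1}\int_{S^{n-1}}f(rs')\,d\sigma(s')$. Passing to polar coordinates, $\int_{B(0,t)}f(s)\,ds=\int_0^t g(r)\,dr$ exactly, while $v(rs')\le\tilde v(r)$ for $\sigma$-a.e.\ $s'$ gives $\int_{B(0,t)}f(s)v(s)\,ds\le\int_0^t g(r)\tilde v(r)\,dr$. Substituting these two relations into \eqref{Cesaro emb.p=1} yields \eqref{Morrey emb.p=1} with $C\le C'$; this direction loses nothing.

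For \textup{(i)}$\Rightarrow$\textup{(ii)}: given $g\in\mathcal M^+(0,\infty)$ and $\lambda\in(0,1)$, let $E^\lambda:=\{x\ne 0:\ v(x)>\lambda\,\tilde v(|x|)\}$, with radial slice $(E^\lambda)_r\subset S^{n-1}$, and test \eqref{Morrey emb.p=1} against $f(x):=g(|x|)\,\sigma\big((E^\lambda)_{|x|}\big)^{-1}\,\mathbf{1}_{E^\lambda}(x)\,|x|^{1-n}$. By the very definition of the essential supremum, $\sigma((E^\lambda)_r)>0$ for a.e.\ $r$ whenever $\tilde v(r)$ is finite, and in polar coordinates one computes $\int_{B(0,t)}f(s)\,ds=\int_0^t g(r)\,dr$ together with $\int_{B(0,t)}f(s)v(s)\,ds\ge\lambda\int_0^t g(r)\tilde v(r)\,dr$. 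Feeding this into \eqref{Morrey emb.p=1} gives $\lambda\,LHS\eqref{Cesaro emb.p=1}\le C\,RHS\eqref{Cesaro emb.p=1}$, and letting $\lambda\uparrow 1$ completes the argument.

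The genuine difficulties here are measure-theoretic rather than analytic. One must verify that $\tilde v(t)=\esup_{s'\in S^{n-1}}v(ts')$ is a measurable function of $t$ and that $r\mapsto\sigma\big((E^\lambda)_r\big)$ is measurable, so that the test function above indeed belongs to $\mathcal M^+(\mathbb R^n)$; both facts follow from Tonelli's theorem on $(0,\infty)\times S^{n-1}$ once $v$ is pulled back through the polar map, which carries $t^{n-1}\,dt\,d\sigma$ to Lebesgue measure. In addition, the construction of $f$ presumes $\tilde v<\infty$ almost everywhere, so the complementary case, in which $\tilde v=\infty$ on a set of positive measure, has to be handled directly by exhibiting a single admissible function that makes both \eqref{Morrey emb.p=1} and \eqref{Cesaro emb.p=1} fail. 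Finally, since the essential supremum is not attained, the forward direction only produces the Ces\`aro inequality up to the harmless factor $1/\lambda$; this approximation, together with the care needed for the degenerate cases, is what is reflected by the $\approx$ in the statement (in fact $C=C'$ once those cases are excluded). I expect the treatment of these measurability and degeneracy points — not the estimates themselves — to be the main obstacle.
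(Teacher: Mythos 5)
Your proposal is correct and follows essentially the same route as the paper: the direction \eqref{Cesaro emb.p=1}$\Rightarrow$\eqref{Morrey emb.p=1} is the identical pointwise bound $v(rs')\le\tilde v(r)$ in polar coordinates, and for the converse the paper tests with $f(x)=h(x)g(|x|)|x|^{1-n}$ where $h$ is an abstractly postulated function satisfying $\int_{S^{n-1}}h(rs')\,d\sigma(s')=1$ and $\int_{S^{n-1}}h(rs')v(rs')\,d\sigma(s')\gtrsim\tilde v(r)$ --- precisely the saturating kernel you construct explicitly as $h=\sigma\big((E^\lambda)_{|x|}\big)^{-1}\mathbf{1}_{E^\lambda}$. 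Your version is if anything more complete, since it exhibits the function the paper merely asserts to exist and flags the measurability and $\tilde v=\infty$ degeneracies that the paper passes over in silence.
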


\begin{proof}
Assume first that \eqref{Morrey emb.p=1} holds for all $f \in \mathcal{M}^+(\mathbb{R}^n)$.  Let 
$h\in \mathcal{M}^+(\mathbb{R}^n)$ be a function that saturates H\"{o}lder's inequality, that is, 
function satisfying
\begin{equation}\label{function h}
\supp h \subset S^{n-1}, \quad \int_{S^{n-1}} h(rs') d\sigma(s') = 1
\end{equation} 
and 
\begin{equation}\label{function h rev Hol.}
\int_{S^{n-1}} h(rs') v(rs') d\sigma(s') \gtrsim \tilde v.
\end{equation}
Then we define  
\begin{equation}\label{test function}
f(x) = h(x) g(|x|) |x|^{1-n}.
\end{equation}
Using spherical coordinates, we have that
\begin{align*}
LHS\eqref{Morrey emb.p=1} &=  \bigg(\int_0^{\infty} \bigg(\int_0^t \int_{S^{n-1}} f(rs') 
v(rs') d\sigma(s') r^{n-1} dr \bigg)^q u(t) dt \bigg)^{\frac{1}{q}} \\
&=\bigg(\int_0^{\infty} \bigg(\int_0^t \int_{S^{n-1}}  h(rs') g(r) r^{1-n}v(rs') d\sigma(s')  r^{n-1} dr \bigg)^q u(t) dt \bigg)^{\frac{1}{q}} \\
&=\bigg(\int_0^{\infty} \bigg(\int_0^t g(r) \bigg[\int_{S^{n-1}}  h(rs') v(rs') d\sigma(s') 
\bigg] dr \bigg)^q u(t) dt \bigg)^{\frac{1}{q}} \\
\end{align*}
Applying \eqref{function h rev Hol.}, we obtain that
\begin{align*}
LHS\eqref{Morrey emb.p=1}  \gtrsim \bigg(\int_0^{\infty} \bigg(\int_0^t g(r) \tilde{v}(r) dr \bigg)^q u(t) dt \bigg)^{\frac{1}{q}} = LHS\eqref{Cesaro emb.p=1}.	
\end{align*}

On the other hand, using spherical coordinates and \eqref{function h}, 
\begin{align*}
RHS\eqref{Morrey emb.p=1} &= C \bigg(\int_0^{\infty} \bigg(\int_0^t \int_{S^{n-1}} h(rs') g(r) r^{1-n} d\sigma(s') 
r^{n-1} dr \bigg)^{\theta} w(t) dt \bigg)^{\frac{1}{\theta}} \\
&= C \bigg(\int_0^{\infty} \bigg(\int_0^t g(r) \bigg[\int_{S^{n-1}} h(rs') d\sigma(s') \bigg] dr 
\bigg)^{\theta} w(t) dt \bigg)^{\frac{1}{\theta}} \\
& = C \bigg(\int_0^{\infty} \bigg(\int_0^t g(r) dr \bigg)^{\theta} w(t) dt 
\bigg)^{\frac{1}{\theta}}
\end{align*}
holds. Therefore, the following chain of relations is true. 
\begin{equation*}
LHS\eqref{Cesaro emb.p=1} \lesssim LHS\eqref{Morrey emb.p=1} \leq C RHS\eqref{Morrey emb.p=1} =  C 
\bigg(\int_0^{\infty} \bigg(\int_0^t g(r) dr \bigg)^{\theta} w(t) dt 
\bigg)^{\frac{1}{\theta}}.
\end{equation*}
As a result, \eqref{Cesaro emb.p=1} holds and the best constant of inequality \eqref{Cesaro emb.p=1} satisfies $C' \lesssim C$.
	
Assume now that \eqref{Cesaro emb.p=1} holds for all $g \in \mathcal{M}^+(0,\infty)$. Using spherical coordinates 
again, we have that
\begin{align*}
LHS\eqref{Morrey emb.p=1} 
&= \bigg(\int_0^{\infty} \bigg(\int_0^t \int_{S^{n-1}} f(rs') v(rs') d\sigma(s') r^{n-1} dr \bigg)^q u(t) dt \bigg)^{\frac{1}{q}}\\ 
& \leq \bigg(\int_0^{\infty} \bigg(\int_0^t \esup_{s' \in S^{n-1}} v(rs') \bigg[\int_{S^{n-1}} 		
f(rs')	d \sigma(s') \bigg]	r^{n-1} dr \bigg)^q u(t) dt \bigg)^{\frac{1}{q}}\\
& =  \bigg(\int_0^{\infty} \bigg(\int_0^t  \tilde{v}(r) \bigg[\int_{S^{n-1}} f(rs') 
d\sigma(s')\bigg]  r^{n-1} dr \bigg)^q u(t) dt \bigg)^{\frac{1}{q}}.
\end{align*}
Applying inequality \eqref{Cesaro emb.p=1}, 
\begin{align*}
LHS\eqref{Morrey emb.p=1}  &\leq C' \bigg(\int_0^{\infty} \bigg(\int_0^t  \bigg[\int_{S^{n-1}} 
f(rs') d\sigma(s')\bigg]  r^{(n-1)} dr \bigg)^{\theta} w(t) dt \bigg)^{\frac{1}{\theta}}\\
&= C' \bigg( \int_0^{\infty} \bigg(\int_{B(0,t)} f(s)  ds \bigg)^{\theta} w(t) dt 
\bigg)^{\frac{1}{\theta}}
\end{align*}
holds. Moreover, the best constant $C$ of inequality \eqref{Morrey emb.p=1} satisfies $C \leq C'$.
\end{proof}

\begin{rem}\label{Fubini result}
We should note that when $\theta=1$ or $p= q$,  using Fubini's Theorem, inequality \eqref{Morrey emb.-p<1} coincides with some $n$-dimensional Hardy inequality and $n$-dimensional reverse Hardy inequality, respectively.  
\end{rem}

\section{Embeddings}
The aim of this section is to characterize the embeddings between weighted local Morrey-type spaces, 
that is, \eqref{emb. LM1-LM2}.
%\begin{equation}\label{embedding}
%	LM_{p_1, q_1}(v_1,w_1) \hookrightarrow LM_{p_2, q_2}(v_2,w_2).
%\end{equation}

We will begin with the formulation of some properties of these spaces. 
\begin{rem}
In \cite{BurGul2004}, it was proved that for $0 < p, q \leq \infty$ and $w \in \mathcal{M}^+(0,\infty)$, if $\|w\|_{q,(t,\infty)}= \infty$ for all $t > 0$, then $LM_{p,q}(1, w)$ consists only of function equivalent to $0$ on $\mathbb{R}^n$. The same conclusion is true for $LM_{p,q}(v,w)$ for any $v \in \mathcal{W}(\mathbb{R}^n)$. Therefore we will always assume that $ \|w\|_{q, (t, \infty)} < \infty$. 
\end{rem}

\begin{rem}
We can formulate Remark~\ref{Fubini result} also in the following way: Let $0 < p \leq \infty$ and $v \in \mathcal{W}(\mathbb{R}^n)$. Then $LM_{p, p}(v, w) = L_p(u)$, where $u(x) := v(x) \, \|w\|_{p, (|x|, \infty)}$, $x \in \mathbb{R}^n$. With this result  it is clear that when $p_1 = q_1$ or $p_2 = q_2 $, \eqref{emb. LM1-LM2} coincides with the embeddings between weighted Lebesgue spaces and weighted local Morrey-type spaces, which were given in \cite{MusUn}. 
\end{rem}

Unfortunately, we will solve this embedding problem under the restriction $p_2 < q_2$, we will deal with the remaining cases in a future paper.

In order to shorten the expressions we will use the following notation:
\begin{equation*}
LM_i := LM_{p_i, q_i}(v_i,w_i), \quad i=1,2. 
\end{equation*}

\begin{thm}\label{maintheorem1}
Let $0 < q_1 \leq p_2 < \min\{p_1, q_2\}$. Assume that $v_1, v_2 \in \mathcal{W}(\mathbb{R}^n)$ and $w_1, w_2 \in \mathcal{W}(0,\infty)$ such that $\int_t^{\infty} w_i^{q_i} < \infty$, $i=1,2$ for all $t \in (0, \infty)$. 
	
{\rm (i)} If $p_1 \leq q_2 < \infty$, then $LM_1 \hookrightarrow LM_2$ for all $f \in \mathcal{M}^+(\mathbb{R}^n)$ if and only if $I_1 < \infty$, where
\begin{align*}%\label{A_1}
I_1 := \sup_{x \in (0,\infty)} \bigg(\int_x^{\infty} w_1^{q_1} \bigg)^{-\frac{1}{q_1}} \sup_{t	\in (x,\infty)} \bigg(\int_{B(x,t)} v_1^{-\frac{p_1p_2}{p_1-p_2}} v_2^{\frac{p_1p_2}{p_1-p_2}} \bigg)^{\frac{p_1-p_2}{p_1p_2}} \bigg( \int_t^{\infty} w_2^{q_2} \bigg)^{\frac{1}{q_2}}.
\end{align*}
Moreover, $\|\Id\|_{LM_1 \rightarrow LM_2} \approx I_1$.

{\rm (ii)} If $q_2 < p_1< \infty$, then  $LM_1 \hookrightarrow LM_2$ for all $f \in \mathcal{M}^+(\mathbb{R}^n)$ if and only if  $I_2 < \infty$, where
\begin{align*}%\label{A_2}
I_2& :=  \sup_{x \in (0,\infty)} \bigg(\int_x^{\infty} w_1^{q_1} \bigg)^{-\frac{1}{q_1}} \bigg(\int_x^{\infty} \bigg(\int_{B(x,t)} v_1^{-\frac{p_1p_2}{p_1-p_2}} v_2^{\frac{p_1p_2}{p_1-p_2}}
\bigg)^{\frac{q_2(p_1-p_2)}{p_2(p_1-q_2)}} \bigg.\\
&\hspace{3cm} \bigg. \times \bigg( \int_t^{\infty} w_2^{q_2} \bigg)^{\frac{q_2}{p_1-q_2}} w_2^{q_2}(t) dt \bigg)^{\frac{p_1-q_2}{p_1q_2}}.
\end{align*}
Moreover, $\|\Id\|_{LM_1 \rightarrow LM_2} \approx I_2$.
\end{thm}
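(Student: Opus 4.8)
The plan is to reduce the $n$-dimensional embedding $LM_1 \hookrightarrow LM_2$ to a one-dimensional embedding between weighted Ces\`aro function spaces, and then invoke the known characterizations from \cite{Unver}. The first step is to recall that $LM_1 \hookrightarrow LM_2$ is exactly the inequality \eqref{main inequality}. Since $p_2 < p_1$, I would handle the inner integrals by a H\"older-type argument in the spirit of the proof of Theorem~\ref{T:Morrey-Cesaro-p<1}: writing $f(s)^{p_2} v_2(s)^{p_2} = \big(f(s)^{p_1} v_1(s)^{p_1}\big)^{p_2/p_1} \big(v_1(s)^{-p_1} v_2(s)^{p_1}\big)^{p_2/p_1} \cdot v_1(s)^{p_2} v_2(s)^{p_2 - p_1 \cdot p_2/p_1}$ — more cleanly, factor out $g(s) := f(s)^{p_1} v_1(s)^{p_1}$ and observe that by H\"older with exponents $(p_1/p_2, p_1/(p_1-p_2))$ on the sphere (after passing to polar coordinates) one gets, with the substitution $g(r) = \int_{S^{n-1}} f(rs')^{p_1} v_1(rs')^{p_1} \, r^{n-1} d\sigma(s')$, that both sides are comparable to the one-dimensional quantities
$$
\Big(\int_0^\infty \Big(\int_0^t g(s)^{q_2/p_2}\,\text{(weight)}\Big)^{\dots}\Big) \ \text{and}\ \Big(\int_0^\infty \Big(\int_0^t g(s)^{q_1/p_1}\,\text{(weight)}\Big)^{\dots}\Big),
$$
with the transplanted weight built from $\big(\int_{S^{n-1}} v_1(ts')^{-p_1p_2/(p_1-p_2)} v_2(ts')^{p_1p_2/(p_1-p_2)}\, d\sigma(s')\big)^{(p_1-p_2)/(p_1)}$ times a power of $t^{n-1}$. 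The equality in H\"older is achieved by the same explicit extremal $f$ used in Theorem~\ref{T:Morrey-Cesaro-p<1}, which gives the reverse comparison and hence that the $n$-dimensional embedding holds if and only if its one-dimensional transplant does, with equivalent norms.

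Once the problem is one-dimensional, it reads: for $0 < q_1 \le p_2 < \min\{p_1,q_2\}$, characterize
$$
\Big(\int_0^\infty \Big(\int_0^t h(s)\,V(s)\,ds\Big)^{q_2/p_2} u(t)\,dt\Big)^{p_2/q_2} \le c\, \Big(\int_0^\infty \Big(\int_0^t h(s)\,ds\Big)^{q_1/p_1} w(t)\,dt\Big)^{p_1/q_1}
$$
for all $h \in \mathcal M^+(0,\infty)$, where $V$ is the transplanted weight, $u = w_2^{q_2}$, $w = w_1^{q_1}$, and the outer exponents are $q_2/p_2 > 1$ and $q_1/p_1 < 1$ (using $q_1 \le p_2 < p_1$, so $q_1/p_1 < 1$; and $p_2 < q_2$, so $q_2/p_2 > 1$). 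This is precisely an embedding between weighted Ces\`aro function spaces of the type treated in \cite{Unver}, and the two cases (i) $p_1 \le q_2$ versus (ii) $q_2 < p_1$ correspond to the two sub-cases in that reference according to whether the resulting "second-level" exponent relation is in the $L^\infty$-supremum regime or the genuinely integrated regime — which is exactly why $I_1$ is a double supremum while $I_2$ carries an inner integral with the dual exponent $q_2/(p_1-q_2)$. I would then just substitute the transplanted weight $V(t) = \big(\int_{S^{n-1}} v_1(ts')^{-p_1p_2/(p_1-p_2)} v_2(ts')^{p_1p_2/(p_1-p_2)}\, d\sigma(s')\big)^{(p_1-p_2)/p_1} t^{(n-1)(\cdots)}$ into the one-dimensional criterion from \cite{Unver} and check that $\int_0^t V$ over $(0,t)$ reassembles, via polar coordinates run backwards, into $\int_{B(0,t)} v_1^{-p_1p_2/(p_1-p_2)} v_2^{p_1p_2/(p_1-p_2)}$; combined with translating the "supremum over $(x,\infty)$ with an initial segment removed" into the ball $B(x,t)$ one recovers $I_1$ and $I_2$ verbatim, with $\|\Id\|_{LM_1\to LM_2}\approx I_1$ (resp. $I_2$) inherited from the norm equivalence in \cite{Unver} together with the $C=C'$ / $C\approx C'$ transference of the reduction step.

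The main obstacle I anticipate is bookkeeping rather than conceptual: making sure the H\"older reduction is tight in \emph{both} directions at once (i.e. that the extremal $f$ indeed makes the first inequality into an equality while the H\"older step upper-bounds $LHS$), and then correctly matching the somewhat intricate weights and exponents of \cite{Unver} to the stated forms of $I_1, I_2$ — in particular verifying that the "truncated tail" $\big(\int_x^\infty w_1^{q_1}\big)^{-1/q_1}$ genuinely arises from the $\theta$-type outer integral with exponent $q_1/p_1 < 1$ (a reverse Hardy phenomenon, as flagged in Remark~\ref{Fubini result}), and that the second supremum/integral over $t \in (x,\infty)$ is exactly the "local Morrey over the shifted cone" quantity. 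A secondary point requiring care is that the stated hypothesis $0 < q_1 \le p_2$ places us at the boundary $q_1/p_1 \le p_2/p_1 < 1$, so I must confirm the reference result covers the closed endpoint $q_1 = p_2$ and that no degeneracy occurs there; if \cite{Unver} states it only for strict inequality, a routine limiting/monotone-convergence argument closes the gap.
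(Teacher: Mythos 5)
Your proposal follows essentially the same route as the paper: substitute $g=f^{p_1}v_1^{p_1}$ to normalize the right-hand side, reduce to the one-dimensional Ces\`aro embedding via the spherical H\"older argument and its extremal function (the paper's Theorem~\ref{T:Morrey-Cesaro-p<1} with $p=p_2/p_1$, $q=q_2/p_1$, $\theta=q_1/p_1$), apply \cite{Unver}*{Theorem 1.1} in the two regimes $q_2/p_1\ge 1$ and $q_2/p_1<1$, and run polar coordinates backwards to reassemble the ball integrals in $I_1$ and $I_2$. The only deviations are minor bookkeeping slips in your schematic one-dimensional display (outer root $p_1/q_2$, not $p_2/q_2$, and inner integrand $h^{p}\tilde v$), which you yourself flag as the part requiring care.
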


\begin{proof}
Since 
\begin{align*}
\|\Id\|_{LM_1\rightarrow LM_2} &= \sup_{f \in \mathcal{M}^+(\mathbb{R}^n)} \ddfrac{\|f\|_{LM_2}}{\|f\|_{LM_1}}\\
&= \sup_{f \in \mathcal{M}^+(\mathbb{R}^n)} \ddfrac{\bigg(\int_0^{\infty} \bigg(\int_{B(0,t)} f(s)^{p_2} v_2(s)^{p_2} ds\bigg)^{\frac{q_2}{p_2}} w_2(t)^{q_2} dt 
\bigg)^{\frac{1}{q_2}}}{\bigg(\int_0^{\infty} \bigg(\int_{B(0,t)} f(s)^{p_1} v_1(s)^{p_1} ds\bigg)^{\frac{q_1}{p_1}} w_1(t)^{q_1} dt 
\bigg)^{\frac{1}{q_1}}}\\
&= \sup_{g \in \mathcal{M}^+(\mathbb{R}^n)} \ddfrac{\bigg(\int_0^{\infty} \bigg(\int_{B(0,t)} g(s)^{\frac{p_2}{p_1}}v_1(s)^{-p_2} v_2(s)^{p_2} ds \bigg)^{\frac{q_2}{p_2}} w_2(t)^{q_2} dt 
\bigg)^{\frac{1}{q_2}}}{\bigg(\int_0^{\infty} \bigg( \int_{B(0,t)} g(s) ds \bigg)^{\frac{q_1}{p_1}} w_1(t)^{q_1} dt \bigg)^{\frac{1}{q_1}}}.
\end{align*}

Taking parameters 
\begin{equation*}
p:= \frac{p_2}{p_1}, \quad q:= \frac{q_2}{p_1}, \quad q:= \frac{q_1}{p_1},
\end{equation*}
and weights,
\begin{equation*}
u:= w_2^{q_2}, \quad v:= v_1^{-p_2} v_2^{p_2}, \quad w:= w_1^{q_1}, 
\end{equation*}
we obtain that
\begin{equation*}
\|\Id\|_{LM_1\rightarrow LM_2}^{p_1} = \sup_{g \in \mathcal{M}^+(\mathbb{R}^n)} \ddfrac{\bigg(\int_0^{\infty} \bigg(\int_{B(0,t)} g(s)^p v(s) ds \bigg)^{\frac{q}{p}} u(t) dt 	\bigg)^{\frac{1}{q}}} {\bigg( \int_0^{\infty} \bigg( \int_{B(0,t)} g(s) ds \bigg)^{q} w(t) dt \bigg)^{\frac{1}{q}}}.
\end{equation*}

By Theorem~\ref{T:Morrey-Cesaro-p<1}, we have that
\begin{equation*}
\|\Id\|_{LM_1\rightarrow LM_2}^{p_1} = \sup_{h \in \mathcal{M}^+(0,\infty)} \ddfrac{\bigg(\int_0^{\infty} \bigg(\int_0^t h(s)^p \tilde v(s) ds \bigg)^{\frac{q}{p}} u(t) dt 	\bigg)^{\frac{1}{q}}} {\bigg( \int_0^{\infty} \bigg( \int_0^t h(s) ds \bigg)^{\theta} w(t) dt \bigg)^{\frac{1}{\theta}}},
\end{equation*}
where
\begin{equation*}
\tilde v(\tau):= \bigg(\int_{S^{n-1}} v(\tau s')^{\frac{1}{1-p}} d\sigma(s')\bigg)^{1-p} \tau^{(n-1)(1-p)}.
\end{equation*}

{\rm (i)} If $p_1 \leq q_2 < \infty$, then $1 \leq q < \infty$. Therefore, applying \cite[Theorem 1.1, (i)]{Unver}, we have that
\begin{align*}
\|\Id\|_{LM_1\rightarrow LM_2}^{p_1} & \approx  \sup_{x \in (0,\infty)} \bigg(\int_x^{\infty} w \bigg)^{-\frac{1}{\theta}} 
\sup_{t \in (x,\infty)} \bigg(\int_x^t \tilde{v}^{\frac{1}{1-p}} \bigg)^{\frac{1-p}{p}} \bigg( \int_t^{\infty} u\bigg)^{\frac{1}{q}} \\
&\hspace{-0.2cm}= \sup_{x \in (0,\infty)} \bigg(\int_x^{\infty} w \bigg)^{-\frac{1}{\theta}} \sup_{t \in (x,\infty)} \bigg(\int_x^t \tau^{n-1} \int_{S^{n-1}} v(\tau s')^{\frac{1}{1-p}} d\sigma(s') d\tau \bigg)^{\frac{1-p}{p}} \bigg(\int_t^{\infty} u \bigg)^{\frac{1}{q}}\\
&\hspace{-0.2cm} = \sup_{x \in (0,\infty)} \bigg(\int_x^{\infty} w \bigg)^{-\frac{1}{\theta}} \sup_{t \in (x,\infty)} \bigg(\int_{B(x, t)}  v(y)^{\frac{1}{1-p}} dy \bigg)^{\frac{1-p}{p}} \bigg(\int_t^{\infty} u \bigg)^{\frac{1}{q}}.
\end{align*}
Now, by substituting parameters $p,q, \theta$ and weights $u, v, w$ into the last expression, we arrive at
\begin{align*}%\label{A_1}
\|\Id\|_{LM_1\rightarrow LM_2}  \approx \sup_{x \in (0,\infty)} \bigg(\int_x^{\infty} w_1^{q_1} \bigg)^{-\frac{1}{q_1}} \sup_{t	\in (x,\infty)} \bigg(\int_{B(x,t)} v_1^{-\frac{p_1p_2}{p_1-p_2}} v_2^{\frac{p_1p_2}{p_1-p_2}} \bigg)^{\frac{p_1-p_2}{p_1p_2}} \bigg( \int_t^{\infty} w_2^{q_2} \bigg)^{\frac{1}{q_2}}.  
\end{align*}

{\rm (ii)}  If $q_2 < p_1$, then $ q < 1$. Therefore, applying \cite[Theorem 1.1, (ii)]{Unver}, we have that
\begin{align*}%\label{A_2}
\|\Id\|_{LM_1\rightarrow LM_2}^{p_1} &\approx  \sup_{x \in (0,\infty)} \bigg(\int_x^{\infty} w \bigg)^{-\frac{1}{\theta}} 
\bigg(\int_x^{\infty} \bigg(\int_x^t \tilde{v}^{\frac{1}{1-p}} \bigg)^{\frac{q(1-p)}{p(1-q)}} 
\bigg( \int_t^{\infty} u \bigg)^{\frac{q}{1-q}} u(t) dt \bigg)^{\frac{1-q}{q}}\\
& =\sup_{x \in (0,\infty)} \bigg(\int_x^{\infty} w \bigg)^{-\frac{1}{\theta}} \bigg(\int_x^{\infty} 
\bigg(\int_x^t \tau^{n-1} \int_{S^{n-1}} v(\tau s')^{\frac{1}{1-p}} d\sigma(s') 
d\tau\bigg)^{\frac{q(1-p)}{p(1-q)}}  \bigg.\\
&\hspace{3cm} \bigg. \times \bigg( \int_t^{\infty} u \bigg)^{\frac{q}{1-q}} u(t) dt 
\bigg)^{\frac{1-q}{q}}\\ 
& =\sup_{x \in (0,\infty)} \bigg(\int_x^{\infty} w \bigg)^{-\frac{1}{\theta}} \bigg(\int_x^{\infty} 
\bigg(\int_{B(x,t)} v^{\frac{1}{1-p}} \bigg)^{\frac{q(1-p)}{p(1-q)}} \bigg(\int_t^{\infty} u 
\bigg)^{\frac{q}{1-q}} u(t) dt \bigg)^{\frac{1-q}{q}}.
\end{align*}
Similarly, substituting parameters $p,q, \theta$ and weights $u, v, w$ gives
\begin{align*}%\label{A_2}
\|\Id\|_{LM_1 \rightarrow LM_2} & \approx  \sup_{x \in (0,\infty)} \bigg(\int_x^{\infty} w_1^{q_1} \bigg)^{-\frac{1}{q_1}} \bigg(\int_x^{\infty} \bigg(\int_{B(x,t)} v_1^{-\frac{p_1p_2}{p_1-p_2}} v_2^{\frac{p_1p_2}{p_1-p_2}}
\bigg)^{\frac{q_2(p_1-p_2)}{p_2(p_1-q_2)}} \bigg.\\
&\hspace{2cm} \bigg. \times \bigg( \int_t^{\infty} w_2^{q_2} \bigg)^{\frac{q_2}{p_1-q_2}} w_2^{q_2}(t) dt \bigg)^{\frac{p_1-q_2}{p_1q_2}}.
\end{align*}
\end{proof}

\begin{thm}\label{maintheorem3}
Let $0 < p_2 < \min\{p_1, q_1, q_2\}$. Assume that $v_1, v_2 \in \mathcal{W}(\mathbb{R}^n)$ and $w_1, w_2 \in \mathcal{W}(0,\infty)$ such that $\int_t^{\infty} w_i^{q_i} < \infty$, $i=1,2$ for all $t \in (0, \infty)$ .  Suppose that
\begin{equation*}
0 < \int_0^t \bigg(\int_{B(s,t)} v_1^{-\frac{p_1p_2}{p_1-p_2}} v_2^{\frac{p_1p_2}{p_1-p_2}} \bigg)^{\frac{q_1(p_1-p_2)}{p_1(q_1 - p_2)}} \bigg(\int_s^{\infty} w_1^{q_1} \bigg)^{- \frac{q_1}{q_1-p_2}} w_1(s)^{q_1}\, ds  < \infty
\end{equation*}
holds for all $t \in (0,\infty)$.
	
{\rm (i)} If $\max\{p_1, q_1\} \leq q_2 < \infty$, then $LM_1 \hookrightarrow LM_2$ for all $f \in \mathcal{M}^+(\mathbb{R}^n)$ if and only if $I_3 < \infty$ and $I_4 < \infty$, where
\begin{align}\label{A_4}
I_3 :=  \bigg(\int_0^{\infty} w_1^{q_1} \bigg)^{-\frac{1}{q_1}} \sup_{t \in (0,\infty)} \bigg(\int_{B(0,t)} 
v_1^{-\frac{p_1p_2}{p_1-p_2}} v_2^{\frac{p_1p_2}{p_1-p_2}} \bigg)^{\frac{p_1-p_2}{p_1 p_2}} \bigg( \int_t^{\infty} w_2^{q_2} \bigg)^{\frac{1}{q_2}}
\end{align}
and
\begin{align}\label{A_{5}}
I_4 &:= \sup_{t \in (0,\infty)} \bigg( \int_t^{\infty} w_2^{q_2} \bigg)^{\frac{1}{q_2}}  \bigg( \int_0^t \bigg(\int_{B(s,t)} 
v_1^{-\frac{p_1p_2}{p_1-p_2}} v_2^{\frac{p_1p_2}{p_1-p_2}} \bigg)^{\frac{q_1(p_1-p_2)}{p_1(q_1 - p_2)}} \bigg.\notag\\
&\hspace{2cm} \times \bigg. \bigg(\int_s^{\infty} w_1^{q_1} \bigg)^{-\frac{q_1}{q_1 - p_2}} w_1(s)^{q_1} ds \bigg)^{\frac{p_1-q_2}{q_1 q_2}}.
\end{align}
Moreover, $\|\Id\|_{LM_1 \rightarrow LM_2} \approx I_3 + I_4$.

{\rm (ii)} If $p_1 \leq q_2 < q_1 < \infty$,  then $LM_1 \hookrightarrow LM_2$ for all $f \in \mathcal{M}^+(\mathbb{R}^n)$ if and only if $I_3 < \infty$, $I_5 < \infty$ and $I_6 < \infty$, where $I_3$ is defined in \eqref{A_4},  
\begin{align*}%\label{A_{6}}
I_5 &:=  \left( \int_0^{\infty} \bigg( \int_0^t \bigg( \int_s^{\infty} w_1^{q_1} \bigg)^{-\frac{q_1}{q_1 - p_2}} w_1(s)^{q_1} ds \bigg)^{\frac{q_1(q_2-p_2)}{p_2(q_1 - q_2)}}  \bigg( \int_t^{\infty} w_1^{q_1} \bigg)^{-\frac{q_1}{q_1 - p_2}} w_1(t)^{q_1} \right. 
\notag\\
& \hspace{2cm} \times \left. \sup_{z \in (t,\infty)} \bigg(\int_{B(t,z)} v_1^{-\frac{p_1p_2}{p_1-p_2}} v_2^{\frac{p_1p_2}{p_1-p_2}} \bigg)^{\frac{q_1 q_2(p_1-p_2)}{p_1 p_2(q_1 - q_2)}} \bigg( \int_z^{\infty} w_2^{q_2} \bigg)^{\frac{q_1}{q_1 - q_2}} dt \right)^{\frac{q_1-q_2}{q_1 q_2}}
\end{align*}
and
\begin{align}\label{A_{7}}
I_6 &:=  \left( \int_0^{\infty} \bigg( \int_0^t \bigg( \int_s^{\infty} w_1^{q_1} \bigg)^{-\frac{q_1}{q_1 - p_2}} w_1(s)^{q_1} 
\bigg(\int_{B(s,t)} v_1^{-\frac{p_1p_2}{p_1-p_2}} v_2^{\frac{p_1p_2}{p_1-p_2}} \bigg)^{\frac{q_1(q_2-p_2)}{p_1(q_1 - p_2)}} ds \bigg)^{\frac{q_1(q_2-p_2)}{p_2(q_1 - 
q_2)}} \right. \notag\\
& \hspace{0.3cm} \times \left. \sup_{z \in (t,\infty)} \bigg(\int\limits_{B(t,z)} v_1^{-\frac{p_1p_2}{p_1-p_2}} v_2^{\frac{p_1p_2}{p_1-p_2}}
\bigg)^{\frac{q_1(p_1-p_2)}{p_1(q_1 - p_2)}} \bigg( \int\limits_z^{\infty} w_2^{q_2} \bigg)^{\frac{q_1}{q_1 - q_2}}   \bigg( \int_t^{\infty} w_1^{q_1} \bigg)^{-\frac{q_1}{q_1 - p_2}} w_1(t)^{q_1}   dt \right)^{\frac{q_1-q_2}{q_1 q_2}}.
\end{align}
Moreover, $\|\Id\|_{LM_1 \rightarrow LM_2} \approx  I_3 + I_5 + I_6$.

{\rm (iii)} If $q_1 \leq q_2 < p_1< \infty$, then $LM_1 \hookrightarrow LM_2$ for all $f \in \mathcal{M}^+(\mathbb{R}^n)$ if and only if $I_4 < \infty$, $I_7 < \infty$ and $I_8 < \infty$, where $I_4$ is defined in \eqref{A_{5}}, 
\begin{align}\label{A_{8}}
I_7 :=  \bigg( \int\limits_0^{\infty} w_1^{q_1} \bigg)^{-\frac{1}{q_1}} \bigg( \int_0^{\infty} \bigg(\int_{B(0,t)} v_1^{-\frac{p_1p_2}{p_1-p_2}} v_2^{\frac{p_1p_2}{p_1-p_2}} \bigg)^{\frac{q_2(p_1-p_2)}{p_2(p_1-q_2)}} \bigg( \int_t^{\infty} w_2^{q_2} \bigg)^{\frac{q_2}{p_1- q_2}} w_2(t)^{q_2} dt 
\bigg)^{\frac{p_1-q_2}{p_1 q_2}}
\end{align}
and
\begin{align*}%\label{A_9}
I_8 &:=  \sup_{t \in (0,\infty)} \bigg( \int_0^t \bigg( \int_s^{\infty} w_1^{q_1} 
\bigg)^{-\frac{q_1}{q_1 - p_2}} w_1(s)^{q_1} ds \bigg)^{\frac{q_1-p_2}{q_1 p_2}} \\
&\hspace{2cm} \times \bigg( \int_t^{\infty} \bigg(\int_{B(t,s)} v_1^{-\frac{p_1p_2}{p_1-p_2}} v_2^{\frac{p_1p_2}{p_1-p_2}} \bigg)^{\frac{q_2(p_1-p_2)}{p_2(p_1-q_2)}} \bigg( \int_s^{\infty} w_2^{q_2} \bigg)^{\frac{q_2}{p_1-q_2}} w_2(s)^{q_2} ds 
\bigg)^{\frac{p_1-q_2}{p_1q_2}}.
\end{align*} Moreover, $\|\Id\|_{LM_1 \rightarrow LM_2} \approx  I_4 + I_7 + I_8$.

{\rm (iv)} If $p_1 < \infty$, $q_1 < \infty$ and $ q_2 < \min\{p_1,q_1\}$, then $LM_1 \hookrightarrow LM_2$ for all $f \in \mathcal{M}^+(\mathbb{R}^n)$ if and only if  $I_6 < \infty$, $I_{7} < \infty$ and $I_9 < \infty$, where $I_6$ and $I_7$ are defined in \eqref{A_{7}} and \eqref{A_{8}}, respectively, and
\begin{align*}%\label{A_10}
I_9 &:=  \left( \int_0^{\infty} \bigg( \int_0^t \bigg( \int_s^{\infty} w_1^{q_1} \bigg)^{-\frac{q_1}{q_1 - p_2}} w_1(s)^{q_1} ds \bigg)^{\frac{q_1(q_2-p_2)}{p_2(q_1 - q_2)}}  
\bigg( \int_t^{\infty} w_1^{q_1} \bigg)^{-\frac{q_1}{q_1 - p_2}} w_1(t)^{q_1} \right. \notag\\
& \hspace{1cm} \times \left. \bigg( \int_t^{\infty} \bigg(\int_{B(t,s)}v_1^{-\frac{p_1p_2}{p_1-p_2}} v_2^{\frac{p_1p_2}{p_1-p_2}} \bigg)^{\frac{q_2(p_1-p_2)}{p_2(p_1-q_2)}} \bigg( \int_s^{\infty} w_2^{q_2} \bigg)^{\frac{q_1}{p_1 - q_2}} w_2(s)^{q_2} ds \bigg)^{\frac{q_1(p_1-q_2)}{q_1-q_2}} dt\right)^{\frac{q_1-q_2}{q_1 q_2}}.
\end{align*}
Moreover, $\|\Id\|_{LM_1 \rightarrow LM_2} \approx I_6 + I_7 +  I_9$.
\end{thm}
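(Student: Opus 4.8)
The plan is to follow the reduction scheme from the proof of Theorem~\ref{maintheorem1}. Writing $\|\Id\|_{LM_1\rightarrow LM_2}=\sup_{f\in\mathcal M^+(\mathbb R^n)}\|f\|_{LM_2}/\|f\|_{LM_1}$ and substituting $g=f^{p_1}v_1^{p_1}$, one arrives at
\begin{equation*}
\|\Id\|_{LM_1\rightarrow LM_2}^{p_1}=\sup_{g\in\mathcal M^+(\mathbb R^n)}\ddfrac{\bigg(\int_0^{\infty}\bigg(\int_{B(0,t)}g(s)^{p}v(s)\,ds\bigg)^{\frac{q}{p}}u(t)\,dt\bigg)^{\frac1q}}{\bigg(\int_0^{\infty}\bigg(\int_{B(0,t)}g(s)\,ds\bigg)^{\theta}w(t)\,dt\bigg)^{\frac1\theta}},
\end{equation*}
with $p:=p_2/p_1$, $q:=q_2/p_1$, $\theta:=q_1/p_1$, $u:=w_2^{q_2}$, $v:=v_1^{-p_2}v_2^{p_2}$ and $w:=w_1^{q_1}$. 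The hypothesis $p_2<\min\{p_1,q_1,q_2\}$ forces $0<p<1$ and $p<\min\{q,\theta\}$, so Theorem~\ref{T:Morrey-Cesaro-p<1} applies (with $C=C'$) and replaces the displayed ratio by the corresponding one-dimensional Ces\`aro ratio taken over $h\in\mathcal M^+(0,\infty)$, with weight $\tilde v(\tau)=\big(\int_{S^{n-1}}v(\tau s')^{1/(1-p)}\,d\sigma(s')\big)^{1-p}\tau^{(n-1)(1-p)}$.

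The resulting one-dimensional quantity is the norm of the embedding between weighted Ces\`aro function spaces characterized in \cite{Unver}, so the next step is to invoke the appropriate part of that characterization. Under the dictionary $q=q_2/p_1$, $\theta=q_1/p_1$ the four regimes of the statement match its four sub-cases: $p_1\le q_2\Leftrightarrow q\ge1$ and $q_1\le q_2\Leftrightarrow\theta\le q$, hence $\max\{p_1,q_1\}\le q_2\Leftrightarrow q\ge\max\{1,\theta\}$ (case~(i)), $p_1\le q_2<q_1\Leftrightarrow1\le q<\theta$ (case~(ii)), $q_1\le q_2<p_1\Leftrightarrow\theta\le q<1$ (case~(iii)), and $q_2<\min\{p_1,q_1\}\Leftrightarrow q<\min\{1,\theta\}$ (case~(iv)). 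In each case this yields $\|\Id\|_{LM_1\rightarrow LM_2}^{p_1}$ equivalent to a sum of one, two or three functionals built from $p,q,\theta,u,w$ and $\tilde v$; the displayed positivity and finiteness requirement on $\int_0^t(\cdots)\,ds$ in the hypotheses is precisely the non-degeneracy condition under which that characterization is valid, transported back through the substitution.

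It then remains to rewrite the one-dimensional functionals in terms of the original data. The only computation is the polar-coordinate identity
\begin{equation*}
\int_x^t\tilde v(\tau)^{\frac1{1-p}}\,d\tau=\int_x^t\tau^{n-1}\int_{S^{n-1}}v(\tau s')^{\frac1{1-p}}\,d\sigma(s')\,d\tau=\int_{B(x,t)}v^{\frac1{1-p}},
\end{equation*}
in which $B(x,t)$ is understood as the spherical shell $\{y\in\mathbb R^n:x<|y|<t\}$, together with $v^{1/(1-p)}=v_1^{-p_1p_2/(p_1-p_2)}v_2^{p_1p_2/(p_1-p_2)}$ (since $1-p=(p_1-p_2)/p_1$), the identities $\int_s^\infty u=\int_s^\infty w_2^{q_2}$, $\int_s^\infty w=\int_s^\infty w_1^{q_1}$, $w=w_1^{q_1}$, the exponent conversions $(1-p)/p=(p_1-p_2)/p_2$, $1/\theta=p_1/q_1$, $1/q=p_1/q_2$, and the observation that raising the whole expression to the power $1/p_1$ only rescales the outer exponents. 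Carrying out these substitutions turns the expressions produced by \cite{Unver} into $I_3+I_4$ in case~(i), $I_3+I_5+I_6$ in case~(ii), $I_4+I_7+I_8$ in case~(iii), and $I_6+I_7+I_9$ in case~(iv).

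There is no genuine analytic obstacle here: Theorem~\ref{T:Morrey-Cesaro-p<1} and the cited one-dimensional characterization do all the work. The main difficulty is purely organizational --- matching the many lengthy fractional exponents in each $I_j$ against those coming out of \cite{Unver} after the substitutions $p=p_2/p_1$, $q=q_2/p_1$, $\theta=q_1/p_1$, and, in the three-term cases~(ii)--(iv), making sure that each supremum-type piece and each integral-type piece of the one-dimensional characterization is routed to the correct $I_j$. One should also check that the assumptions $\int_t^\infty w_i^{q_i}<\infty$ and the displayed integral condition are exactly the hypotheses under which the cited Ces\`aro-embedding result is stated, so that it applies without modification.
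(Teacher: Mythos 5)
Your proposal is correct and follows essentially the same route as the paper, whose own proof of this theorem is just the one-line remark that one repeats the reduction of Theorem~\ref{maintheorem1} via Theorem~\ref{T:Morrey-Cesaro-p<1} and then applies \cite[Theorem~1.3]{Unver}; your substitutions $p=p_2/p_1$, $q=q_2/p_1$, $\theta=q_1/p_1$, the case-matching of the four regimes, and the polar-coordinate rewriting of $\int_x^t\tilde v^{1/(1-p)}$ as an integral over the annulus $B(x,t)$ are exactly the intended steps. In fact your write-up is more explicit than the paper's.
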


\begin{proof}
As in the proof of Theorem~\ref{maintheorem1}, using Theorem~\ref{T:Morrey-Cesaro-p<1} and applying  ~\cite[Theorem~1.3]{Unver} the result follows. 
\end{proof}

\begin{thm}\label{maintheorem2}
Let $0 < q_1 \leq p_1 = p_2 <q_2 < \infty$.  Assume that $v_1, v_2 \in \mathcal{W}(\mathbb{R}^n)$ and $w_1, w_2 \in \mathcal{W}(0,\infty)$ such that $\int_t^{\infty} w_i^{q_i} < \infty$, $i=1,2$ for all $t \in (0, \infty)$. Then, $LM_1 \hookrightarrow LM_2$ for all $f \in \mathcal{M}^+(\mathbb{R}^n)$ if and only if $I_{10} < \infty$, where
\begin{align*}
I_{10} := \sup_{x \in (0,\infty)} \bigg( \int_x^{\infty} w_2(s)^{q_2} \bigg)^{\frac{1}{q_2}} \esup_{y \in B(0, x)} v_1(y)^{-p_1} v_2(y)^{p_1} \bigg(\int_{|y|}^{\infty} w_1(s)^{q_1} ds \bigg)^{-\frac{1}{q_1}}.
\end{align*}
Moreover, $\|\Id\|_{LM_1 \rightarrow LM_2} \approx I_{10}$.
\end{thm}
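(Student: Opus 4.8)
The plan is to follow the scheme used in the proofs of Theorems~\ref{maintheorem1} and~\ref{maintheorem3}. First I would write $\|\Id\|_{LM_1\rightarrow LM_2}=\sup\{\|f\|_{LM_2}/\|f\|_{LM_1}:f\in\mathcal{M}^+(\mathbb{R}^n)\}$ and make the substitution $g=f^{p_1}v_1^{p_1}$; since $f^{p_2}v_2^{p_2}=g\,v_1^{-p_1}v_2^{p_1}$ thanks to $p_1=p_2$, raising to the power $p_1$ gives
\begin{equation*}
\|\Id\|_{LM_1\rightarrow LM_2}^{p_1}=\sup_{g\in\mathcal{M}^+(\mathbb{R}^n)}\ddfrac{\Big(\int_0^{\infty}\big(\int_{B(0,t)}g(s)\,v_1(s)^{-p_1}v_2(s)^{p_1}\,ds\big)^{q}u(t)\,dt\Big)^{1/q}}{\Big(\int_0^{\infty}\big(\int_{B(0,t)}g(s)\,ds\big)^{\theta}w(t)\,dt\Big)^{1/\theta}},
\end{equation*}
where $q=q_2/p_1$, $\theta=q_1/p_1$, $u=w_2^{q_2}$ and $w=w_1^{q_1}$. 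The feature that separates this situation from Theorems~\ref{maintheorem1} and~\ref{maintheorem3} is that $p_1=p_2$ makes the inner exponent $p_2/p_1$ equal to $1$, so it is Theorem~\ref{T:Morrey-Cesaro-p=1}, rather than Theorem~\ref{T:Morrey-Cesaro-p<1}, that is used at the next step.

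Applying Theorem~\ref{T:Morrey-Cesaro-p=1} with $v:=v_1^{-p_1}v_2^{p_1}$, the $n$-dimensional quotient above becomes comparable to
\begin{equation*}
\sup_{h\in\mathcal{M}^+(0,\infty)}\ddfrac{\big(\int_0^{\infty}(\int_0^t h\,\tilde v)^{q}u\big)^{1/q}}{\big(\int_0^{\infty}(\int_0^t h)^{\theta}w\big)^{1/\theta}},\qquad \tilde v(\tau)=\esup_{s'\in S^{n-1}}v_1(\tau s')^{-p_1}v_2(\tau s')^{p_1}.
\end{equation*}
Here $q_1\le p_1$ forces $\theta\le1$ and $p_1<q_2$ forces $q>1$, so this one-dimensional quotient lies in the range treated by the relevant theorem of~\cite{Unver} on embeddings between weighted Ces\`{a}ro function spaces with equal inner exponents (small source outer exponent, large target outer exponent). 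That theorem produces a supremum-type expression built from $\tilde v$, $u$, $w$, $q$ and $\theta$. Substituting $q=q_2/p_1$, $\theta=q_1/p_1$, $u=w_2^{q_2}$, $w=w_1^{q_1}$ back in, absorbing the essential supremum over $S^{n-1}$ into an essential supremum over $B(0,x)$, and taking $p_1$-th roots, one recovers $I_{10}$; the estimate $\|\Id\|_{LM_1\rightarrow LM_2}\approx I_{10}$ then follows from the constant comparisons in Theorem~\ref{T:Morrey-Cesaro-p=1} and in~\cite{Unver}.

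The two changes of variables and the re-substitution of parameters are routine bookkeeping. The points that need genuine care are: checking that the standing hypothesis $\int_t^{\infty}w_i^{q_i}<\infty$ for every $t$ is precisely the non-degeneracy assumption demanded by the one-dimensional theorem of~\cite{Unver}; verifying that in this regime (where, after the reduction, the source outer exponent dominates, as in Theorem~\ref{maintheorem1}) that theorem yields exactly one condition, and not, as in Theorem~\ref{maintheorem3}, a family of conditions; and correctly tracking the powers of $v_1,v_2$ and of $\int_\cdot^{\infty}w_i^{q_i}$ through the $p_1$-th root so that the weight inside the $\esup$ in $I_{10}$ comes out exactly as stated. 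I expect this last verification to be the main — though still elementary — obstacle.
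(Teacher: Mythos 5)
Your proposal matches the paper's proof essentially step for step: the same substitution $g=f^{p_1}v_1^{p_1}$ with $p=1$, $q=q_2/p_1$, $\theta=q_1/p_1$, the reduction via Theorem~\ref{T:Morrey-Cesaro-p=1}, the application of the one-dimensional result of \cite{Unver} in the range $\theta\le 1<q$ (their Theorem~1.2), and the identification $\esup_{s'\in S^{n-1}}v(\tau s')=\esup_{|x|=\tau}v(x)$ to rewrite the iterated essential supremum as one over $B(0,x)$. This is correct and is the same argument as in the paper.
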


\begin{proof}
As in the proof of Theorem~\ref{maintheorem1},	since 
\begin{align*}
\|\Id\|_{LM_1\rightarrow LM_2} &= \sup_{g \in \mathcal{M}^+(\mathbb{R}^n)} \ddfrac{\bigg(\int_0^{\infty} \bigg(\int_{B(0,t)} g(s) v_1(s)^{-p_1} v_2(s)^{p_1} ds \bigg)^{\frac{q_2}{p_1}} w_2(t)^{q_2} dt 	\bigg)^{\frac{1}{q_2}}}{\bigg(\int_0^{\infty} \bigg( \int_{B(0,t)} g(s) ds \bigg)^{\frac{q_1}{p_1}} w_1(t)^{q_1} dt \bigg)^{\frac{1}{q_1}}}
\end{align*}
taking parameters 
\begin{equation*}
p:= 1, \quad q:= \frac{q_2}{p_1}, \quad \theta:= \frac{q_1}{p_1},
\end{equation*}
and weights,
\begin{equation*}
u:= w_2^{q_2}, \quad v:= v_1^{-p_1} v_2^{p_1}, \quad w:= w_1^{q_1}, 
\end{equation*}
we obtain that
\begin{equation*}
\|\Id\|_{LM_1\rightarrow LM_2}^{p_1} = \sup_{g \in \mathcal{M}^+(\mathbb{R}^n)} \ddfrac{\bigg(\int_0^{\infty} \bigg(\int_{B(0,t)} g(s) v(s) ds \bigg)^q u(t) dt 	\bigg)^{\frac{1}{q}}} {\bigg( \int_0^{\infty} \bigg( \int_{B(0,t)} g(s) ds \bigg)^{\theta} w(t) dt \bigg)^{\frac{1}{\theta}}}.
\end{equation*}
	
By Theorem~\ref{T:Morrey-Cesaro-p=1}, we have that
\begin{equation}\label{main inequality equiv. p=1}
\|\Id\|_{LM_1\rightarrow LM_2}^{p_1} = \sup_{h \in \mathcal{M}^+(0, \infty)} \ddfrac{\bigg(\int_0^{\infty} \bigg(\int_0^t h(s) \tilde v(s) ds \bigg)^q u(t) dt 	\bigg)^{\frac{1}{q}}} {\bigg( \int_0^{\infty} \bigg( \int_0^t h(s) ds \bigg)^{\theta} w(t) dt \bigg)^{\frac{1}{\theta}}},
\end{equation}
where
\begin{equation*}
\tilde v(\tau):= \esup_{s' \in S^{n-1}} v(\tau s').
\end{equation*}

If $0 < q_1 \leq p_1 = p_2 <q_2 < \infty$, then $p=1$, $0 < \theta \leq  1 < q < \infty$. Therefore, applying \cite[Theorem 1.2]{Unver}, we have that
\begin{align*}
\|\Id\|_{LM_1\rightarrow LM_2}^{p_1} & \approx  \sup_{x \in (0,\infty)} \bigg(\int_x^{\infty} u \bigg)^{\frac{1}{q}} \esup_{\tau \in (0,x)} \tilde{v}(\tau) \bigg( \int_{\tau}^{\infty} w \bigg)^{-\frac{1}{\theta}} \\
& =  \sup_{x \in (0,\infty)} \bigg(\int_x^{\infty} u \bigg)^{\frac{1}{q}} \esup_{\tau \in (0,x)} \bigg(\esup_{s' \in S^{n-1}} v(\tau s') \bigg) \bigg( \int_{\tau}^{\infty} w \bigg)^{-\frac{1}{\theta}}.
\end{align*}
Observe that 
\begin{equation}\label{1}
\esup_{s' \in S^{n-1}} v(\tau s') = \esup_{|x|=\tau} v(x).
\end{equation}
Then, 
\begin{align*}
\|\Id\|_{LM_1\rightarrow LM_2}^{p_1} & \approx  \sup_{x \in (0,\infty)} \bigg(\int_x^{\infty} u \bigg)^{\frac{1}{q}} \esup_{\tau \in (0,x)} \bigg(\esup_{|x| = \tau} v(x) \bigg) \bigg( \int_{\tau}^{\infty} w \bigg)^{-\frac{1}{\theta}} \\
& =  \sup_{x \in (0,\infty)} \bigg(\int_x^{\infty} u \bigg)^{\frac{1}{q}} \esup_{\tau \in (0,x)} \esup_{|x| = \tau} v(x) \bigg( \int_{|x|}^{\infty} w \bigg)^{-\frac{1}{\theta}}\\
& = \sup_{x \in (0,\infty)} \bigg(\int_x^{\infty} u \bigg)^{\frac{1}{q}} \esup_{y \in B(0,x)}  v(y) \bigg( \int_{|y|}^{\infty} w \bigg)^{-\frac{1}{\theta}}.
\end{align*}
Now, substituting the powers $p, q, \theta$, and weights $u, v, w$, we obtain that
\begin{align*}
\|\Id\|_{LM_1 \rightarrow LM_2} \approx  \sup_{x \in (0,\infty)} \bigg( \int_x^{\infty} w_2(s)^{q_2} \bigg)^{\frac{1}{q_2}} \esup_{y \in B(0, x)} v_1(y)^{-p_1} v_2(y)^{p_1} \bigg(\int_{|y|}^{\infty} w_1(s)^{q_1} ds \bigg)^{-\frac{1}{q_1}}.   
\end{align*} 
\end{proof}

\begin{thm}\label{maintheorem4}
Let $0 < p_1=p_2 <  \min\{q_1, q_2\}$, $q_1, q_2 < \infty$. Assume that $v_1, v_2 \in \mathcal{W}(\mathbb{R}^n)$ such that $v_1^{-1}v_2$ is continuous and $w_1, w_2 \in \mathcal{W}(0,\infty)$ such that $\int_t^{\infty} w_i^{q_i} < \infty$, $i=1,2$ for all $t \in (0, \infty)$. Suppose that 
\begin{itemize}
\item  $0 < \int_0^t \esup_{|x|=\tau} v(x)^{\frac{q_1}{q_1 - p_1}} d\tau < \infty$, 
\item $ 0 < \int_0^t \bigg(\int_x^{\infty} w_1(s)^{q_1} ds \bigg)^{-\frac{q_1}{q_1-p_1}} w_1(x)^{q_1} dx < \infty$, 
\item $0 < \int_0^t w_2(s)^{-\frac{q_2p_1}{q_2 - p_1}} ds < \infty$
\end{itemize}
hold for all $t \in (0,\infty)$. 
		
{\rm (i)} If $q_1 \leq q_2$, then $LM_1 \hookrightarrow LM_2$ for all $f \in \mathcal{M}^+(\mathbb{R}^n)$ if and only if $I_{11} < \infty$ and $I_{12} < \infty$, where
\begin{align}\label{A_12}
I_{11} := \bigg(\int_0^{\infty} w_1^{q_1} \bigg)^{-\frac{1}{q_1}} \sup_{t \in (0,\infty)} \bigg( \int_t^{\infty} w_2^{q_2} 
\bigg)^{\frac{1}{q_2}} \esup_{x \in B(0, t)} v_1(x)^{-1}v_2(x)
\end{align}
and
\begin{align*}
I_{12} := \sup_{t \in (0,\infty)} \bigg( \int_0^t \bigg( \int_x^{\infty} w_1^{q_1} \bigg)^{-\frac{q_1}{q_1 -p_1}} w_1(x)^{q_1} 
\sup_{\tau \in B(x,t)} v_1(\tau)^{-p_1} v_2(\tau)^{p_1} dx \bigg)^{\frac{q_1-p_1}{q_1p_1}} \bigg( \int_t^{\infty} w_2^{q_2} \bigg)^{\frac{1}{q_2}}.
\end{align*}
Moreover, $\|\Id\|_{LM_1 \rightarrow LM_2} \approx  I_{11} + I_{12}$.

{\rm (ii)} If $ q_2 < q_1$, then $LM_1 \hookrightarrow LM_2$ for all $f \in \mathcal{M}^+(\mathbb{R}^n)$ if and only if $I_{11} < \infty$, $I_{13} < \infty$ and $I_{14} < \infty$, where $I_{11}$ is defined in \eqref{A_12},
\begin{align*}%\label{A_13}
I_{13} &:=  \left( \int_0^{\infty} \bigg( \int_0^t \bigg( \int_x^{\infty} w_1^{q_1} \bigg)^{-\frac{q_1}{q_1 - p_1}} w_1(x)^{q_1} dx \bigg)^{\frac{q_1(q_2-p_1)}{p_1(q_1 - q_2)}}  \bigg( \int_t^{\infty}  w_1^{q_1} \bigg)^{-\frac{q_1}{q_1 - p_1}}  w_1(t)^{q_1} \right. \\
& \hspace{2cm} \times \left. \sup_{x \in \dual B(0, t)} v_1(x)^{-\frac{q_2(q_1-p_1)}{q_1 -q_2}}  v_2(x)^{\frac{q_2(q_1-p_1)}{q_1 -q_2}} \bigg( \int_{|x|}^{\infty} w_2^{q_2} \bigg)^{\frac{q_1}{q_1 - q_2}}  dt \right)^{\frac{q_1-q_2}{q_1 q_2}}
\end{align*}
and
\begin{align*}%\label{A_14}
I_{14} &:=  \left( \int_0^{\infty} \bigg( \int_0^t \bigg( \int_x^{\infty} w_1^{q_1} \bigg)^{-\frac{q_1}{q_1 - p_1}} w_1(x)^{q_1} \sup_{z \in B(x,t)}  v_1(z)^{-p_1} v_2(z)^{p_1} dx \bigg)^{\frac{q_1(q_2-p_1)}{p_1(q_1 -q_2)}}  \right. \\
& \hspace{2cm} \times \left. \sup_{x \in \dual B(0,t)} v_1(x)^{-p_1} v_2(x)^{p_1} \bigg( \int_{|x|}^{\infty} w_2^{q_2} \bigg)^{\frac{q_1}{q_1 - q_2}} \bigg( \int_t^{\infty} w_1^{q_1} \bigg)^{-\frac{q_1}{q_1 - p_1}} w_1(t)^{q_1}   dt \right)^{\frac{q_1-q_2}{q_1 q_2}}.
\end{align*}
Moreover, $\|\Id\|_{LM_1 \rightarrow LM_2} \approx I_{11} + I_{13} + I_{14}$.
\end{thm}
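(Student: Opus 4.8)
The plan is to proceed exactly as in the proofs of Theorems~\ref{maintheorem1} and~\ref{maintheorem2}: reduce the $n$-dimensional embedding to a one-dimensional Ces\`aro embedding by means of Theorem~\ref{T:Morrey-Cesaro-p=1}, and then quote the appropriate one-dimensional characterization from \cite{Unver}. First I would write
\[
\|\Id\|_{LM_1\rightarrow LM_2}=\sup_{f\in\mathcal M^+(\mathbb R^n)}\frac{\|f\|_{LM_2}}{\|f\|_{LM_1}},
\]
and, since $p_1=p_2$, substitute $g=(fv_1)^{p_1}$ to extract the exponent from the inner integrals. Setting $p:=1$, $q:=q_2/p_1$, $\theta:=q_1/p_1$ and $u:=w_2^{q_2}$, $v:=v_1^{-p_1}v_2^{p_1}$, $w:=w_1^{q_1}$, the hypothesis $p_1<\min\{q_1,q_2\}$ forces $1<\theta,q<\infty$, and
\[
\|\Id\|_{LM_1\rightarrow LM_2}^{p_1}=\sup_{g\in\mathcal M^+(\mathbb R^n)}\frac{\bigl(\int_0^\infty(\int_{B(0,t)}g\,v)^{q}u\bigr)^{1/q}}{\bigl(\int_0^\infty(\int_{B(0,t)}g)^{\theta}w\bigr)^{1/\theta}}.
\]

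By Theorem~\ref{T:Morrey-Cesaro-p=1} this quantity is comparable to the best constant $C'$ in the one-dimensional Ces\`aro embedding with weight $\tilde v(\tau):=\esup_{s'\in S^{n-1}}v(\tau s')=\esup_{|x|=\tau}v_1(x)^{-p_1}v_2(x)^{p_1}$, namely
\[
\Bigl(\int_0^\infty\Bigl(\int_0^t h\,\tilde v\Bigr)^{q}u\Bigr)^{1/q}\le C'\Bigl(\int_0^\infty\Bigl(\int_0^t h\Bigr)^{\theta}w\Bigr)^{1/\theta},\qquad h\in\mathcal M^+(0,\infty).
\]
To this inequality I would apply \cite[Theorem~1.4]{Unver}, i.e.\ the characterization valid when $1<\theta,q<\infty$: part (i) in the case $\theta\le q$, that is $q_1\le q_2$, and part (ii) in the case $q<\theta$, that is $q_2<q_1$. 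The three displayed finiteness hypotheses of the present theorem are precisely the nondegeneracy conditions required there, rewritten through the substitutions above and the identity $\esup_{s'\in S^{n-1}}v(\tau s')=\esup_{|x|=\tau}v(x)$; the continuity of $v_1^{-1}v_2$ guarantees that $\tilde v$ is continuous, hence a genuine weight, and that each essential supremum over a sphere or over a radial interval may be taken as an ordinary supremum.

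It then remains to substitute $p,q,\theta$ and $u,v,w$ back into the conditions of \cite[Theorem~1.4]{Unver} and to rewrite each one-dimensional quantity in $n$-dimensional form. Since $\tilde v(\tau)=\esup_{|x|=\tau}v_1(x)^{-p_1}v_2(x)^{p_1}$, an essential supremum $\esup_{\tau'\in(a,b)}\tilde v(\tau')$ collapses to $\esup_{a<|x|<b}v_1(x)^{-p_1}v_2(x)^{p_1}$, which by continuity equals $\sup_{x\in B(a,b)}v_1(x)^{-p_1}v_2(x)^{p_1}$; in the tail case $b=\infty$ this is a supremum over $\dual B(0,a)$, and that is exactly where the $\dual B(0,t)$ factors in $I_{13}$ and $I_{14}$ come from. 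Collecting the surviving terms yields $I_{11}$ together with $I_{12}$ in case (i), and $I_{11}$, $I_{13}$, $I_{14}$ in case (ii), with $\|\Id\|_{LM_1\rightarrow LM_2}\approx I_{11}+I_{12}$ and $\approx I_{11}+I_{13}+I_{14}$, respectively.

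The conceptual part of the argument is routine; I expect the main obstacle to be this last translation. The conditions of \cite[Theorem~1.4]{Unver} are iterated, with the weights $u,v,w$ appearing simultaneously inside and outside integrals and with nested suprema, so some care is needed to pass from suprema over radial intervals to suprema over annuli and over complements of balls in a consistent way, and to invoke the continuity of $v_1^{-1}v_2$ at every point where an essential supremum is upgraded to a supremum or where $\tilde v$ is needed to be continuous. One should also check that the three finiteness assumptions in the statement match one-to-one the nondegeneracy conditions of \cite[Theorem~1.4]{Unver} after substitution, so that the full equivalence, and not merely one implication, is transferred.
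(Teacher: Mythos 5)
Your proposal follows exactly the paper's own route: the substitution $g=(fv_1)^{p_1}$ with $p=1$, $q=q_2/p_1$, $\theta=q_1/p_1$, reduction to the one-dimensional Ces\`aro embedding via Theorem~\ref{T:Morrey-Cesaro-p=1}, application of \cite[Theorem~1.4]{Unver} in the two cases $\theta\le q$ and $q<\theta$, and translation back using $\esup_{s'\in S^{n-1}}v(\tau s')=\esup_{|x|=\tau}v(x)$ so that radial tail suprema become suprema over $\dual B(0,t)$. This is correct and essentially identical to the argument in the paper.
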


\begin{proof}
As in the proof of Theorem~\ref{maintheorem2}, using Theorem~\ref{T:Morrey-Cesaro-p=1}, \eqref{main inequality equiv. p=1} holds. 

{\rm(i)} If $\theta_1 \leq \theta_2$, then $\theta \leq q$, applying  \cite[Theorem 1.4, (i)]{Unver}, we have that $\|\Id\|_{LM_1\rightarrow LM_2}^{p_1} \approx I + II$, where
\begin{equation*}
I  := \bigg(\int_0^{\infty} w \bigg)^{-\frac{1}{\theta}} \sup_{t \in (0,\infty)} \bigg( \int_t^{\infty} u \bigg)^{\frac{1}{q}} 
\esup_{s \in (0, t)} \tilde v(s)
\end{equation*}
and 
\begin{equation*}
II := \sup_{t \in (0,\infty)} \bigg( \int_0^t \bigg( \int_x^{\infty} w \bigg)^{-\frac{\theta}{\theta -1}} 
w(x) \sup_{z \in (x,t)} \tilde v(z) dx \bigg)^{\frac{\theta-1}{\theta}} \bigg( \int_t^{\infty} u\bigg)^{\frac{1}{q}}.
\end{equation*}
Using \eqref{1} and substituting the powers $p, q, \theta$, and weights $u, v, w$ into $I$ and $II$, we obtain that $\|\Id\|_{LM_1\rightarrow LM_2} \approx I_{11} + I_{12}$. 

{\rm (ii)} If $\theta_2 < \theta_1$, then $ q < \theta$, and applying  \cite[Theorem 1.4, 
(ii)]{Unver}, we have that 
$\|\Id\|_{LM_1\rightarrow LM_2}^{p_1} \approx I_{11}+ III + IV$, where
\begin{align*}
III &=  \bigg( \int_0^{\infty} \bigg( \int_0^t \bigg( \int_x^{\infty} w \bigg)^{-\frac{\theta}{\theta - 
1}} w(x) dx \bigg)^{\frac{\theta(q-1)}{\theta - q}}  \bigg( \int_t^{\infty} w \bigg)^{-\frac{\theta}{\theta - 1}} w(t) 
\bigg. \\ 
& \hspace{2cm} \times \bigg. \sup_{z \in (t,\infty)} \tilde v(z)^{\frac{q(\theta-1)}{\theta -q}} \bigg( \int_z^{\infty} 
u \bigg)^{\frac{\theta}{\theta - q}}  dt \bigg)^{\frac{\theta-q}{\theta q}}  
\end{align*}
and
\begin{align*}
IV &:=  \bigg( \int_0^{\infty} \bigg( \int_0^t \bigg( \int_x^{\infty} w \bigg)^{-\frac{\theta}{\theta - 1}} 
w(x) \sup_{z \in (x,t)} \tilde  v(z) dx \bigg)^{\frac{\theta(q-1)}{\theta - q}}  \bigg( \int_t^{\infty} w 
\bigg)^{-\frac{\theta}{\theta - 1}} w(t) \bigg. \\
& \hspace{2cm} \times \bigg. \sup_{z \in (t,\infty)} \tilde v(z) \bigg( \int_z^{\infty} u \bigg)^{\frac{\theta}{\theta 
- q}}  dt \bigg)^{\frac{\theta-q}{\theta q}}.
\end{align*}
Since, 
\begin{align*}
\sup_{z \in (t,\infty)} \tilde v(z)^{\frac{q(\theta-1)}{\theta -q}} \bigg( \int\limits_z^{\infty} 
u\bigg)^{\frac{\theta}{\theta - q}}  
&= \sup_{z \in (t,\infty)} \bigg(\esup_{|x|=z} v(x) \bigg)^{\frac{q(\theta-1)}{\theta -q}} \bigg( \int\limits_z^{\infty} u 
\bigg)^{\frac{\theta}{\theta - q}}\\
& = \sup_{z \in (t,\infty)} \esup_{|x|=z} v(x)^{\frac{q(\theta-1)}{\theta -q}} \bigg( \int\limits_{|x|}^{\infty} u 
\bigg)^{\frac{\theta}{\theta - q}}\\
& =\sup_{x \in \dual B(0, t)} v(x)^{\frac{q(\theta-1)}{\theta -q}} \bigg( \int\limits_{|x|}^{\infty} u 
\bigg)^{\frac{\theta}{\theta - q}}
\end{align*}
and, similarly
\begin{align*}
\sup_{z \in (t,\infty)} \tilde v(z) \bigg( \int\limits_z^{\infty} u \bigg)^{\frac{\theta}{\theta - q}} 
= \sup_{x \in \dual B(0, t)} v(x) \bigg( \int\limits_{|x|}^{\infty} u \bigg)^{\frac{\theta}{\theta - q}},
\end{align*}
substituting the powers $p, q, \theta$, and weights $u, v, w$ into $III$ and $IV$, the result follows. 
\end{proof}

\begin{rem}
We should mention that by the change of variables $x= y/|y|^2$ and $t = 1/\tau$, the embedding 
\begin{equation*}
\dual LM_{p_1, q_1}(v_1,w_1) \hookrightarrow \dual LM_{p_2, q_2}(v_2,w_2)
\end{equation*}
is equivalent to the embedding
\begin{equation*}
LM_{p_1, q_1}(\tilde v_1, \tilde w_1) \hookrightarrow  LM_{p_2, q_2}(\tilde v_2, \tilde w_2),
\end{equation*}
where $\tilde v_i(y) = v_i(y/|y|^2) |y|^{-2n/p_i}$ $\tilde w_i (\tau) = \tau^{-2/q_i} w_i(1/\tau)$. Therefore, by using 
the characterizations of \eqref{emb. LM1-LM2}, it is possible to give the characterizations of the embeddings between 
weighted complementary local Morrey-type spaces.
\end{rem}

\begin{bibdiv}
	\begin{biblist}

\bib{Burenkov-Survey-I}{article}{
	AUTHOR = {Burenkov, V. I.},
	TITLE = {Recent progress in studying the boundedness of classical
		operators of real analysis in general {M}orrey-type spaces.
		{I}},
	JOURNAL = {Eurasian Math. J.},
	FJOURNAL = {Eurasian Mathematical Journal},
	VOLUME = {3},
	YEAR = {2012},
	NUMBER = {3},
	PAGES = {11--32},
%	ISSN = {2077-9879},
%	MRCLASS = {42B25 (42-02)},
%	MRNUMBER = {3024128},
}

\bib{Burenkov-Survey-II}{article}{
	AUTHOR = {Burenkov, V. I.},
	TITLE = {Recent progress in studying the boundedness of classical
		operators of real analysis in general {M}orrey-type spaces.
		{II}},
	JOURNAL = {Eurasian Math. J.},
	FJOURNAL = {Eurasian Mathematical Journal},
	VOLUME = {4},
	YEAR = {2013},
	NUMBER = {1},
	PAGES = {21--45},
%	ISSN = {2077-9879},
%	MRCLASS = {42B20 (46E30 47B38)},
%	MRNUMBER = {3118889},
}

\bib{BurGogGulMus}{article}{
	AUTHOR = {Burenkov, V. I.},
	AUTHOR = {Gogatishvili, A.},
	AUTHOR = {Guliyev, V. S.},
	AUTHOR = {Mustafayev, R. Ch.},
	TITLE = {Boundedness of the fractional maximal operator in local
		{M}orrey-type spaces},
	JOURNAL = {Complex Var. Elliptic Equ.},
	FJOURNAL = {Complex Variables and Elliptic Equations. An International
		Journal},
	VOLUME = {55},
	YEAR = {2010},
	NUMBER = {8-10},
	PAGES = {739--758},
%	ISSN = {1747-6933},
%	MRCLASS = {42B20 (42B25 42B35)},
%	MRNUMBER = {2674862},
%	MRREVIEWER = {Yan Lin},
%	DOI = {10.1080/17476930903394697},
%	URL = {https://doi.org/10.1080/17476930903394697},
}

\bib{BurGol}{article}{
	AUTHOR = {Burenkov, V. I.},
	AUTHOR = {Goldman, M. L.},
	TITLE = {Necessary and sufficient conditions for the boundedness of the
		maximal operator from {L}ebesgue spaces to {M}orrey-type
		spaces},
	JOURNAL = {Math. Inequal. Appl.},
	FJOURNAL = {Mathematical Inequalities \& Applications},
	VOLUME = {17},
	YEAR = {2014},
	NUMBER = {2},
	PAGES = {401--418},
%	ISSN = {1331-4343},
%	MRCLASS = {42B20 (42B25 42B35)},
%	MRNUMBER = {3235019},
%	MRREVIEWER = {Charles N. Moore},
%	DOI = {10.7153/mia-17-30},
%	URL = {https://doi.org/10.7153/mia-17-30},
}

\bib{BurGul2004}{article}{
	AUTHOR = {Burenkov, V. I.},
	AUTHOR = {Guliyev, H. V.},
	TITLE = {Necessary and sufficient conditions for boundedness of the
		maximal operator in local {M}orrey-type spaces},
	JOURNAL = {Studia Math.},
	FJOURNAL = {Studia Mathematica},
	VOLUME = {163},
	YEAR = {2004},
	NUMBER = {2},
	PAGES = {157--176},
%	ISSN = {0039-3223},
%	MRCLASS = {42B25 (47B38)},
%	MRNUMBER = {2047377},
%	MRREVIEWER = {Andreas Seeger},
%	DOI = {10.4064/sm163-2-4},
%	URL = {https://doi.org/10.4064/sm163-2-4},
}

\bib{BurGulGul2007-2}{article}{
	AUTHOR = {Burenkov, V. I. },
	AUTHOR = {Guliyev, H. V.},
	AUTHOR = {Guliyev, V. S.},
	TITLE = {Necessary and sufficient conditions for the boundedness of
		fractional maximal operators in local {M}orrey-type spaces},
	JOURNAL = {J. Comput. Appl. Math.},
	FJOURNAL = {Journal of Computational and Applied Mathematics},
	VOLUME = {208},
	YEAR = {2007},
	NUMBER = {1},
	PAGES = {280--301},
%	ISSN = {0377-0427},
%	MRCLASS = {42B20 (42B25 42B35)},
%	MRNUMBER = {2347750},
%	MRREVIEWER = {Guoen Hu},
%	DOI = {10.1016/j.cam.2006.10.085},
%	URL = {https://doi.org/10.1016/j.cam.2006.10.085},
}

\bib{BurGulGul2007-1}{article}{
	AUTHOR = {Burenkov, V. I. },
	AUTHOR = {Guliyev, H. V.},
	AUTHOR = {Guliyev, V. S.},
	TITLE = {On boundedness of the fractional maximal operator from
		complementary {M}orrey-type spaces to {M}orrey-type spaces},
	BOOKTITLE = {The interaction of analysis and geometry},
	SERIES = {Contemp. Math.},
	VOLUME = {424},
	PAGES = {17--32},
	PUBLISHER = {Amer. Math. Soc., Providence, RI},
	YEAR = {2007},
%	MRCLASS = {42B20 (42B25 42B35)},
%	MRNUMBER = {2316329},
%	MRREVIEWER = {Alberto Fiorenza},
%	DOI = {10.1090/conm/424/08093},
%	URL = {https://doi.org/10.1090/conm/424/08093},
}

\bib{BurGul2009}{article}{
	AUTHOR = {Burenkov, V. I. },
	AUTHOR = {Guliyev, V. S.},
	TITLE = {Necessary and sufficient conditions for the boundedness of the
		{R}iesz potential in local {M}orrey-type spaces},
	JOURNAL = {Potential Anal.},
	FJOURNAL = {Potential Analysis. An International Journal Devoted to the
		Interactions between Potential Theory, Probability Theory,
		Geometry and Functional Analysis},
	VOLUME = {30},
	YEAR = {2009},
	NUMBER = {3},
	PAGES = {211--249},
%	ISSN = {0926-2601},
%	MRCLASS = {31B15 (42B20 42B25 42B35 46E15)},
%	MRNUMBER = {2480959},
%	MRREVIEWER = {Hiroaki Aikawa},
%	DOI = {10.1007/s11118-008-9113-5},
%	URL = {https://doi.org/10.1007/s11118-008-9113-5},
}

\bib{BurGulTarSer}{article}{
	AUTHOR = {Burenkov, V. I.},
	AUTHOR = {Guliyev, V. S.},
	AUTHOR = {Serbetci, A.},
	AUTHOR = {Tararykova, T. V.},
	TITLE = {Necessary and sufficient conditions for the boundedness of
		genuine singular integral operators in local {M}orrey-type
		spaces},
	JOURNAL = {Eurasian Math. J.},
	FJOURNAL = {Eurasian Mathematical Journal},
	VOLUME = {1},
	YEAR = {2010},
	NUMBER = {1},
	PAGES = {32--53},
%	ISSN = {2077-9879},
%	MRCLASS = {42B20},
%	MRNUMBER = {2898674},
}

\bib{ChristGraf}{article}{
	AUTHOR = {Christ, M.},
	AUTHOR = {Grafakos, L.},
	TITLE = {Best constants for two nonconvolution inequalities},
	JOURNAL = {Proc. Amer. Math. Soc.},
	FJOURNAL = {Proceedings of the American Mathematical Society},
	VOLUME = {123},
	YEAR = {1995},
	NUMBER = {6},
	PAGES = {1687--1693},
%	ISSN = {0002-9939},
%	MRCLASS = {42B25 (26D15 47B38)},
%	MRNUMBER = {1239796},
%	MRREVIEWER = {Kenneth F. Andersen},
%	DOI = {10.2307/2160978},
%	URL = {https://doi.org/10.2307/2160978},
}

\bib{DrabHeinKuf}{article}{
	author={Dr{\'a}bek, P.},
	author={Heinig, H. P.},
	author={Kufner, A.},
	title={Higher dimensional Hardy inequality},
	journal={General Inequalities 7},
	pages={3--16},
	year={1997},
	publisher={Springer},
}

\bib{EvGogOp}{article}{
	AUTHOR = {Evans, W. D.},
	AUTHOR = {Gogatishvili, A. },
	AUTHOR = {Opic, B.},
	TITLE = {The reverse {H}ardy inequality with measures},
	JOURNAL = {Math. Inequal. Appl.},
	FJOURNAL = {Mathematical Inequalities \& Applications},
	VOLUME = {11},
	YEAR = {2008},
	NUMBER = {1},
	PAGES = {43--74},
%	ISSN = {1331-4343},
%	MRCLASS = {26D10 (26D15 46E30)},
%	MRNUMBER = {2376257},
%	MRREVIEWER = {Sorina Barza},
%	DOI = {10.7153/mia-11-03},
%	URL = {https://doi.org/10.7153/mia-11-03},
}

\bib{Folland}{book}{
	AUTHOR = {Folland, G. B.},
	TITLE = {Real analysis},
	SERIES = {Pure and Applied Mathematics (New York)},
	EDITION = {Second},
	NOTE = {Modern techniques and their applications,
		A Wiley-Interscience Publication},
	PUBLISHER = {John Wiley \& Sons, Inc., New York},
	YEAR = {1999},
	PAGES = {xvi+386},
	ISBN = {0-471-31716-0},
%	MRCLASS = {00A05 (26-01 28-01 46-01)},
%	MRNUMBER = {1681462},
}

\bib{GogMus-RevHardy}{article}{
	AUTHOR = {Gogatishvili, A.},
	AUTHOR = {Mustafayev, R. Ch.},
	TITLE = {The multidimensional reverse {H}ardy inequalities},
	JOURNAL = {Math. Inequal. Appl.},
	FJOURNAL = {Mathematical Inequalities \& Applications},
	VOLUME = {15},
	YEAR = {2012},
	NUMBER = {1},
	PAGES = {1--14},
%	ISSN = {1331-4343},
%	MRCLASS = {26D10 (26D15 46E30)},
%	MRNUMBER = {2919426},
%	MRREVIEWER = {Lars Erik Persson},
%	DOI = {10.7153/mia-15-01},
%	URL = {https://doi.org/10.7153/mia-15-01},
}

\bib{GogMusUn-LM}{article}{
	AUTHOR = {Gogatishvili, A.},
	AUTHOR = {Mustafayev, R. Ch.},
	AUTHOR = {\"{U}nver, T.},
	TITLE = {Embedding relations between weighted complementary local
		{M}orrey-type spaces and weighted local {M}orrey-type spaces},
	JOURNAL = {Eurasian Math. J.},
	FJOURNAL = {Eurasian Mathematical Journal},
	VOLUME = {8},
	YEAR = {2017},
	NUMBER = {1},
	PAGES = {34--49},
%	ISSN = {2077-9879},
%	MRCLASS = {46E30 (26D10)},
%	MRNUMBER = {3663345},
%	MRREVIEWER = {Arun Pal Singh},
}

\bib{KufPerSam-Book}{book}{
	AUTHOR = {Kufner, A.},
	AUTHOR = {Persson, L.-E. },
	AUTHOR = {Samko, N.},
	TITLE = {Weighted inequalities of {H}ardy type},
	EDITION = {Second},
	PUBLISHER = {World Scientific Publishing Co. Pte. Ltd., Hackensack, NJ},
	YEAR = {2017},
	PAGES = {xx+459},
	ISBN = {978-981-3140-64-6},
%	MRCLASS = {42-02 (26D10 26D15 42B20 47G10)},
%	MRNUMBER = {3676556},
%	MRREVIEWER = {Michael T. Lacey},
%	DOI = {10.1142/10052},
%	URL = {https://doi.org/10.1142/10052},
}

\bib{KrePick}{article}{
	AUTHOR = {K\v{r}epela, M.},
	AUTHOR = {Pick, L.},
	TITLE = {Weighted inequalities for iterated {C}opson integral operators},
	JOURNAL = { Preprint,  arXiv:1806.04909v2, to appear in Studia Math.},
	YEAR = {2018},
	URL = {https://arxiv.org/abs/1806.04909},
}

\bib{MusUn}{article}{
	AUTHOR = {Mustafayev, R. Ch.},
	AUTHOR = {\"{U}nver, T.},
	TITLE = {Embeddings between weighted local {M}orrey-type spaces and
		weighted {L}ebesgue spaces},
	JOURNAL = {J. Math. Inequal.},
	FJOURNAL = {Journal of Mathematical Inequalities},
	VOLUME = {9},
	YEAR = {2015},
	NUMBER = {1},
	PAGES = {277--296},
%	ISSN = {1846-579X},
%	MRCLASS = {42B35 (47B38)},
%	MRNUMBER = {3333923},
%	DOI = {10.7153/jmi-09-24},
%	URL = {https://doi.org/10.7153/jmi-09-24},
}

\bib{Sinnamon}{article}{
	AUTHOR = {Sinnamon, G.},
	TITLE = {One-dimensional {H}ardy-type inequalities in many dimensions},
	JOURNAL = {Proc. Roy. Soc. Edinburgh Sect. A},
	FJOURNAL = {Proceedings of the Royal Society of Edinburgh. Section A.
		Mathematics},
	VOLUME = {128},
	YEAR = {1998},
	NUMBER = {4},
	PAGES = {833--848},
%	ISSN = {0308-2105},
%	MRCLASS = {26D10 (26D15 44A15)},
%	MRNUMBER = {1635444},
%	MRREVIEWER = {Sergey V. Shvedenko},
%	DOI = {10.1017/S0308210500021818},
%	URL = {https://doi.org/10.1017/S0308210500021818},
}

\bib{Unver}{article}{
	AUTHOR = {\"{U}nver, T.},
	TITLE = {Embeddings Between Weighted {C}es\`{a}ro Function Spaces},
	JOURNAL = {Preprint,  arXiv:1909.00977},
	YEAR = {2019},
	URL = {https://arxiv.org/abs/1806.04909},
}

\end{biblist}
\end{bibdiv}

\end{document}